\newtheorem{proposition}{Proposition}[section]
\newtheorem{theorem}[proposition]{Theorem}
\newtheorem{lemma}[proposition]{Lemma}
\theoremstyle{definition}
\newtheorem{definition}[proposition]{Definition}
\newtheorem{notation}[proposition]{Notation}
\theoremstyle{remark}
\newtheorem{remark}[proposition]{Remark}
\numberwithin{equation}{section}
\newcommand{\Ri}{\mathds{R}}
\newcommand{\Leray}{\mathds{P}}
\newcommand{\e}{\varepsilon}
\newcommand{\less}{\lesssim}
\newcommand{\cb}{{\cal B}}		\newcommand{\CB}{{\mathscr{B}}}
		\newcommand{\CC}{{\mathscr{C}}}
		\newcommand{\CS}{{\mathscr{S}}} 
		\newcommand{\CU}{{\mathscr{U}}}
\newcommand{\curl}{\mathop{\rm curl}}
\newcommand{\divv}{\mathop{\rm div}}
\newcommand{\Sh}{S}
\newcommand{\Mh}{M}
\newcommand{\Dh}{D}
\newcommand{\Lh}{\Delta}
\newcommand{\dd}{\, {\rm d}}
\newcommand{\contr}{\lrcorner \,}
\newcommand{\Nn}{{\rm{\sf N}}}
\newcommand{\Rr}{{\rm{\sf R}}}
\newcommand{\demi}[1]{[#1[}
\newcommand{\ouvert}[1]{]#1[}
\author{Cl\'ement Denis \\
Aix-Marseille University, I2M}
\date{}
	\title{Existence and uniqueness in critical spaces for the magnetohydrodynamical system in $\Ri^n$}
\begin{document}
	
\maketitle
	
	\begin{abstract}
		We give a description of a magnetohydrodynamical system in $n$ dimension using the exterior derivative. We then prove existence of global solutions for small initial data and local existence for arbitrary large data in two classes of critical spaces - $L^q_tL^p_x$ and $\CC_tL^p_x$, as well as uniqueness for solutions in $\CC_tL^p_x$.
	\end{abstract}

\section{Introduction}

In $\Ri^3$, the magnetohydrodynamical system on a time interval $\ouvert{0,T}$ ($0<T\le+\infty$) as considered in \cite{ST83} and \cite{Mo21} is written as
\begin{equation}
	\label{mhd}
	\left\{
	\begin{array}{rclcl}
		\partial_t u-\Delta u+\nabla \pi-u\times({\rm curl}\,u)&=&({\rm curl}\,b)\times b&\mbox{ in }&\ouvert{0,T}\times\Ri^3\\
		\partial_t b-\Delta b&=&{\rm curl}\,(u\times b)&\mbox{ in }&\ouvert{0,T}\times\Ri^3\\
		{\rm div}\,u&=&0&\mbox{ in }&\ouvert{0,T}\times\Ri^3\\
		{\rm div}\,b&=&0&\mbox{ in }&\ouvert{0,T}\times\Ri^3\\
	\end{array}
	\right.
\end{equation}
where the {\sl velocity} of the (incompressible homogeneous) fluid is denoted by $u:\,\ouvert{0,T}\times\Ri^3\to{\mathds{R}}^3$, the {\sl magnetic field} is denoted by $b:\,\ouvert{0,T}\times\Ri^3\to{\mathds{R}}^3$ and the (dynamic) {\sl pressure} of the fluid is denoted by $\pi:\,\ouvert{0,T}\times\Ri^3\to{\mathds{R}}$. 

The first equation in \eqref{mhd} corresponds to the {\sl Navier-Stokes}
equations with the fluid subject to
the {\sl Laplace force} $({\rm curl}\,b)\times b$ applied by the magnetic field $b$. 
The second equation of \eqref{mhd} 
describes the evolution of the magnetic field following the so-called {\sl induction} equation.
The condition $\divv u=0$ corresponds to the incompressibility of the fluid, while the divergence-free condition on the magnetic field $b$ comes from the fact that $b$ is in the range of the ${\rm curl}$ operator.

Our aim in this paper is to study the same system in higher dimensions - i.e. $\Ri^n$, $n\ge 3$. This requires us to rewrite the system \eqref{mhd} using the exterior and interior derivative (see \cite{Mo21}) - an added benefit of this formulation is that it makes it easy to generalise the results of this paper to Riemannian manifolds.

Interpreting the scalar function $\pi$ as a $0$-form, $u$ as $1$-form and $b$ as a $2$-form, we can write \eqref{mhd} as: 
\begin{equation}
	\label{mhd1diff}\tag{MHD}
	\left\{
	\begin{array}{rclcl}
		\partial_t u+\Sh u+d \pi+ u\lrcorner\,du&=&-d^*b\lrcorner\, b&\mbox{ in }&\ouvert{0,T}\times\Ri^n\\
		\partial_t b+\Mh b&=&-d(u\lrcorner\, b)&\mbox{ in }&\ouvert{0,T}\times\Ri^n\\
		u(t,\cdot)&\in&{\rm{\sf N}}(d^*)_{|_{\Lambda^1}}&\mbox{ for all }&t\in\ouvert{0,T}\\
		b(t,\cdot)&\in&{\rm{\sf R}}(d)_{|_{\Lambda^2}}&\mbox{ for all }&t\in\ouvert{0,T},\\
	\end{array}
	\right.
\end{equation}
Where $d$ is the exterior derivative, $d^*$ is its adjoint, $\Sh$ is the Stokes operator and $\Mh$ is the Maxwell operator. We detail the signification of those notations in section \ref{tools}, but for now let us add the following remarks:
\begin{remark}
	\begin{itemize}
		\item All the terms in the first equation are 1-forms, while all the terms in the second equation are 2-forms.
		\item $\Nn(d)$ is the null of $d$ ; $\Rr(d^*)$ is the range of $d^*$.
		\item In $\Ri^3$ the magnetic field $b$ is a 2-form, but can be identified as a 1-form in $\Ri^3$ using the Hodge-star operator. This is however impossible in dimension $n$.
	\end{itemize}
\end{remark}

As for the Navier-Stokes system, the system \eqref{mhd1diff} with $T=\infty$ is invariant under the scaling
\begin{align*}
	u_\lambda(t,x)&=\lambda u(\lambda^2t,\lambda x) \\
	b_\lambda(t,x)&=\lambda b(\lambda^2t,\lambda x) \\
	\pi_\lambda(t,x)&=\lambda^2 \pi(\lambda^2t,\lambda x),
\end{align*} for $\lambda>0$, $t>0$ and $x\in\Ri^n$. This suggests two possible critical spaces for $(u,b)$: either 
\begin{equation*}
L^q\left(\demi{0,\infty};  L^p(\Ri^n, \Lambda^1)\right)\times L^q(\demi{0,\infty};  L^p(\Ri^n,  \Lambda^2)),
\end{equation*}
with $\frac{n}{p}+\frac{2}{q}=1$, or 
\begin{equation*}
\CC(\demi{0,\infty};L^n(\Ri^n,\Lambda^1))\times \CC(\demi{0,\infty};L^n(\Ri^n,\Lambda^2)).	
\end{equation*}
The purpose of this paper is to prove existence and uniqueness of mild solutions (as defined in Definition~\ref{def_mild})of the \eqref{mhd1diff} system in $\Ri^n$. 
Section \ref{LqLp} is devoted to $L^q_tL^p_x$ spaces, with Theorem \ref{thm:mhd1global} and Theorem \ref{thm:mhd1local}  proving respectively the global existence (in time) of mild solutions for small initial data and the local existence for arbitrary large initial data.

Section \ref{CtLn} and \ref{Uniqueness} are devoted to $\CC_tL^n_x$ spaces. In section \ref{CtLn} we prove the existence of mild solutions (Theorems \ref{thm:mhd1global_2} and \ref{thm:mhd1local_2}), while in Section \ref{Uniqueness}, Theorem \ref{thm:uniqueness} we prove that those mild solutions are in fact unique.

\section{Tools and notations}
\label{tools}

In this section we gathered notations and results about differential forms as well as the Laplacian, Stokes and Maxwell operators. Most of it is directly taken from \cite{Mo21} (which however focuses on bounded domains), while the proof for the different results stated can be found in \cite{McIM18} as well as \cite{MM09a} and \cite{MM09b}. 

\begin{notation}
	Let $A$ be an (unbounded) operator on a Banach space $X$. We denote by ${\rm{\sf D}}(A)$ 
	its domain, ${\rm{\sf R}}(A)$ its range and ${\rm{\sf N}}(A)$ its null space.
\end{notation}
We also denote by $\CS(\Ri^n)$ the usual Schwartz space on $\Ri^n$.
\subsection{Differential forms}

\paragraph{Exterior algebra}
We consider the exterior algebra $\Lambda=\Lambda^0\oplus\Lambda^1\oplus\dots\oplus\Lambda^n$ of
${\mathbb{R}}^n$, and we denote by $\{e_I, I\subset \llbracket 1,n \rrbracket \}$ the canonic basis of $\Lambda$, where $e_I=e_{j_1}\wedge e_{j_2}\wedge\dots\wedge e_{j_\ell}$
for $I=\{j_1,\dots,j_\ell\}$ with $\ j_1<j_2<\dots<j_{\ell}$.
\\
Note that $\Lambda^0$ is in fact $\Ri^n$, and that for $\ell<0$ or $\ell>n$ we set $\Lambda^l=\{0\}$.
\\
The basic operations on the exterior algebra $\Lambda$ are 
\begin{enumerate}[$(i)$ ]
	\item 
	the exterior product $\wedge:\Lambda^k\times\Lambda^\ell\to\Lambda^{k+\ell}$,
	\item
	the interior product $\lrcorner\,:\Lambda^k\times\Lambda^\ell\to\Lambda^{\ell-k}$,
	\item
	the inner product $\langle\cdot,\cdot\rangle:\Lambda^\ell\times\Lambda^\ell
	\to{\mathbb{R}}$. 
\end{enumerate}
These correspond to the following operations in $\Ri^3$: Let $u$ be a vector, interpreted as a 1-form:
\begin{itemize}
	\item[-]
	for $\varphi$ scalar, interpreted as a 0-form: $u\wedge \varphi=\varphi u$, $u\lrcorner\, \varphi=0$.
	\item[-]
	for $\varphi$ scalar, interpreted as a 3-form: $u\wedge v\varphi=0$, $u\lrcorner\, \varphi=\varphi u$;
	\item[-]
	$v$ vector, interpreted as a 1-form: $u\wedge v=u\times v$, $u\lrcorner\, v=u\cdot v$;
	\item[-]
	$v$ vector, interpreted as a 2-form: $u\wedge v=u\cdot v$, $u\lrcorner\, v =-u\times v$.
\end{itemize}

\paragraph{Exterior and interior derivatives}
We denote the {\sl exterior derivative} by $d:=\nabla\wedge=\sum_{j=1}^n \partial_j e_j\wedge$ and 
the {\sl interior derivative} (or co-derivative) by
$\delta:=-\nabla\lrcorner\,=-\sum_{j=1}^n \partial_j e_j\lrcorner\,$. They act on 
{\sl differential forms} from $\Ri^n$ to the exterior algebra  $\Lambda=\Lambda^0\oplus\Lambda^1\oplus\dots\oplus\Lambda^n$ of
${\mathbb{R}}^n$, and satisfy $d^2=d\circ d=0$ 
and $\delta^2=\delta\circ\delta=0$.

In $\Ri^3$ they correspond to the following operators:
\begin{align}
	d&: \Lambda^0=\Ri \overset{\nabla}{\longrightarrow} \Lambda^1=\Ri^3 \overset{\curl}{\longrightarrow} \Lambda^2=\Ri^3 \overset{\divv}{\longrightarrow} \Lambda^3=\Ri \\
	\delta&: \Lambda^0=\Ri \overset{-\divv}{\longleftarrow} \Lambda^1=\Ri^3 \overset{\curl}{\longleftarrow} \Lambda^2=\Ri^3 \overset{-\nabla}{\longleftarrow} \Lambda^3=\Ri 
\end{align}

We denote by 
${\rm{\sf D}}(d)$ the domain of (the differential operator) $d$ and by ${\rm{\sf D}}(\delta)$ the domain of $\delta$. They are defined by
\begin{equation}
{\rm{\sf D}}(d):=\bigl\{u\in L^2(\Ri^n,\Lambda); du\in L^2(\Ri^n,\Lambda)\bigr\}
\quad \mbox{and}\quad
{\rm{\sf D}}(\delta):=\bigl\{u\in L^2(\Ri^n,\Lambda); 
\delta u\in L^2(\Ri^n,\Lambda)\bigr\}.
\end{equation}
Similarly, their domains in $L^p$ are:
\begin{equation}
{\rm{\sf D}}^p(d):=\bigl\{u\in L^p(\Ri^n,\Lambda); 
du\in L^p(\Ri^n,\Lambda)\bigr\}
\  \mbox{ and }\  
{\rm{\sf D}}^p(\delta):=\bigl\{u\in L^p(\Ri^n,\Lambda); 
\delta u\in L^p(\Ri^n,\Lambda)\bigr\}.
\end{equation}

We also consider the maximal adjoint operator of $d$ in $L^2(\Ri^n,\Lambda)$, denoted by $d^*$. In $\Ri^n$, $\delta=d^*$, and we will use $d^*$ in the rest of this paper. 

For more details on $d$ and $\delta$, we refer to \cite[Section~2]{AMcI04} and 
\cite[Section~2]{CMcI10}. Both these papers also contain some historical background.

\subsection{Laplacian, Stokes and Maxwell operators}

\begin{definition}
	The {\sl Dirac operator} on $\Ri^n$ is
	\[
	\Dh:=d+d^*=d+\delta.
	\]
	The {\sl Laplacian operator} on $\Ri^n$ is defined as
	\begin{equation*}
		-\Lh :=\Dh^2=d d^*+ d^*d=d \delta+ \delta d.
	\end{equation*}
\end{definition}
\begin{remark}
	For $1$ forms in 3 dimension, this last equation correspond to the well-known identity $-\Delta = \curl \curl - \nabla\divv$.
\end{remark}
$\Dh$ is a closed densely defined operator on $L^2(\Ri^n,\Lambda)$ and we have the following Hodge decomposition (see \cite[Section 4, proof of Proposition 2.2]{AKMcI06Invent}):
\begin{align}
	\label{H2}\tag{$H_2$}
	L^2(\Ri^n,\Lambda)=&\overline{{\rm{\sf R}}(d)}\stackrel{\bot}{\oplus}
	\overline{{\rm{\sf R}}(d^*)}\stackrel{\bot}{\oplus}{\rm{\sf N}}(\Dh)\\
	=&\overline{{\rm{\sf R}}(d)}\stackrel{\bot}{\oplus}{\rm{\sf N}}(d^*)
	\label{H2Rd}\\
	=&{\rm{\sf N}}(d)\stackrel{\bot}{\oplus}\overline{{\rm{\sf R}}(d^*)}.
	\label{H2Nd}
\end{align}
Note that the harmonic forms in $L^2$ on $\Ri^d$ are trivial, so ${\rm{\sf N}}(\Dh)= {\rm{\sf N}}(d)\cap{\rm{\sf N}}(d^*)
={\rm{\sf N}}\bigl(\Lh \bigr)=\{0\}$. The orthogonal 
projection from $L^2(\Ri^n,\Lambda)$ onto ${\rm{\sf N}}(d^*)$ (see \eqref{H2Rd}),
restricted to $1$-forms, is the 
well-known Helmholtz (or Leray) projection denoted by ${\mathbb{P}}$. 

The Hodge decompositions exist also in $L^p$ (see \cite[Theorems~2.4.2 and 2.4.14]{Schw95}):
\begin{align}
	\label{Hp}\tag{$H_p$}
	L^p(\Ri^n,\Lambda)=&\overline{{\rm{\sf R}}^p(d)}\oplus
	\overline{{\rm{\sf R}}^p(d^*)}\oplus{\rm{\sf N}}(\Dh)\\
	=&\overline{{\rm{\sf R}}^p(d)} \oplus{\rm{\sf N}}^p(d^*)\label{HpRd}\\
	=&{\rm{\sf N}}^p(d) \oplus\overline{{\rm{\sf R}}^p(d^*)}\label{HpNd}
\end{align}
for all $p\in\ouvert{1,\infty}$ and the projection ${\mathbb{P}}:L^p(\Ri^n,\Lambda^1)\to 
{\rm{\sf N}}^p(d^*)_{|_{\Lambda^1}}$ extends accordingly. 

\begin{definition}
	\begin{itemize}
		\item We denote by $\Sh$ the Stokes operator:
	\begin{equation}
		\Sh:=\Dh^2=d^*d \ {\rm in} \ N^2(d*)_{|\Lambda^1},
	\end{equation}
where $N^2(d^*)_{|\Lambda^1}$ is the restriction of $N^2(d^*)$ to the space of 1-forms $\Lambda^1$.
		\item We denote by $\Mh$ the Maxwell operator:
		\begin{equation}
		\Mh:=\Dh^2=dd^* \ {\rm in} \ N^2(d)_{|\Lambda^2},
	\end{equation}
	where $N^2(d)_{|\Lambda^2}$ is the restriction of $N^2(d)$ to the space of 2-forms $\Lambda^2$.
	\end{itemize}
\end{definition}
\begin{remark}
	On $\Ri^n$, $\Sh=-\Lh_{|N^2(d*)_{|\Lambda^1}}$ and $\Mh=-\Lh_{N^2(d)_{|\Lambda^2}}.$. This means that the two following theorems, written for the Laplacian operator $\Lh$, are also true for both the Stokes and Maxwell operator.
\end{remark}

First those operators are sectorial and thus admit a bounded holomorphic functional calculus.
\begin{theorem}
	\label{thm:HodgeL&S}
	\begin{enumerate}
		\item The Laplacian operator $-\Lh $ is sectorial of angle $0$ in $L^p(\Ri^n,\Lambda)$ and for all $\mu\in \ouvert{0,\frac{\pi}{2}}$, $-\Lh $ admits a bounded $S^\circ_{\mu+}$ holomorphic functional calculus in $L^p(\Ri^n,\Lambda)$.
	\end{enumerate}
\end{theorem}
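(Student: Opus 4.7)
The plan is to reduce to the scalar Laplacian on $L^p(\Ri^n)$ and then invoke Mikhlin's Fourier multiplier theorem. The first step is to observe that on flat $\Ri^n$ the Hodge Laplacian acts diagonally with respect to the basis $\{e_I\}$. Indeed, since $\widehat{du}(\xi)=i\xi\wedge\hat u(\xi)$ and $\widehat{d^*u}(\xi)=-i\xi\lrcorner\,\hat u(\xi)$, and since
\begin{equation*}
\xi\wedge(\xi\lrcorner\, v)+\xi\lrcorner\,(\xi\wedge v)=|\xi|^2\, v\qquad\text{for all }\xi\in\Ri^n,\ v\in\Lambda,
\end{equation*}
the symbol of $-\Lh=dd^*+d^*d$ is just $|\xi|^2$ times the identity on $\Lambda$. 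Consequently $-\Lh$ on $L^p(\Ri^n,\Lambda)$ is a direct sum of $2^n$ independent copies of the scalar Laplacian $-\Lh_s$ on $L^p(\Ri^n)$, and it suffices to prove both claims for $-\Lh_s$.

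For sectoriality of angle $0$, fix any $\theta\in\ouvert{0,\pi}$ and let $z$ lie outside the closed sector $\overline{S^\circ_{\theta+}}$. The resolvent $(-\Lh_s-z)^{-1}$ is the Fourier multiplier with symbol $m_z(\xi)=(|\xi|^2-z)^{-1}$; since $|\xi|^2-z$ stays at distance $\gtrsim_\theta |\xi|^2+|z|$ from $0$, a direct computation yields $|\partial^\alpha_\xi m_z(\xi)|\lesssim_{\alpha,\theta}|z|^{-1}|\xi|^{-|\alpha|}$. Mikhlin's theorem then gives $\|(-\Lh_s-z)^{-1}\|_{L^p\to L^p}\lesssim_\theta |z|^{-1}$, uniformly in such $z$; as $\theta$ can be taken arbitrarily small, this is exactly sectoriality of angle $0$.

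For the $H^\infty$ calculus, fix $\mu\in\ouvert{0,\frac{\pi}{2}}$ and $f\in H^\infty(S^\circ_{\mu+})$, and define $f(-\Lh_s)$ as the Fourier multiplier with symbol $\xi\mapsto f(|\xi|^2)$. Holomorphy and boundedness of $f$, together with Cauchy's integral formula on a disc of radius $c_\mu|\xi|^2$ around $|\xi|^2\in\ouvert{0,\infty}\subset S^\circ_{\mu+}$, give $|f^{(k)}(|\xi|^2)|\lesssim_{k,\mu}\|f\|_\infty|\xi|^{-2k}$, and the chain rule transfers this into the Mikhlin condition $|\partial^\alpha_\xi f(|\xi|^2)|\lesssim_{\alpha,\mu}\|f\|_\infty|\xi|^{-|\alpha|}$. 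A final application of Mikhlin yields $\|f(-\Lh_s)\|_{L^p\to L^p}\lesssim_{\mu,p,n}\|f\|_{H^\infty(S^\circ_{\mu+})}$, as required. The one step deserving genuine care is the componentwise reduction in the first paragraph, which relies crucially on the vanishing of the Weitzenböck curvature in flat space; every subsequent estimate is a routine Fourier-multiplier argument, so no substantial analytical obstacle is expected.
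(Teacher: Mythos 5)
Your argument is correct, but it is worth noting that the paper does not actually prove this statement: it is quoted from the literature (the introduction to Section 2 points to \cite{McIM18}, \cite{MM09a}, \cite{MM09b}, and the Hodge-decomposition material to \cite{AKMcI06Invent}), where it is established by first-order (Hodge--Dirac) methods and quadratic estimates that survive on Lipschitz domains and manifolds. Your route is the natural self-contained one on flat $\Ri^n$: the anticommutation identity $\xi\wedge(\xi\lrcorner\,v)+\xi\lrcorner\,(\xi\wedge v)=|\xi|^2v$ correctly diagonalises $dd^*+d^*d$ into $2^n$ copies of the scalar Laplacian (this is exactly the vanishing of the Weitzenb\"ock term in flat space), and the two Mikhlin applications --- resolvent bounds $\lesssim_\theta|z|^{-1}$ off every sector for sectoriality of angle $0$, and the Cauchy-estimate-plus-chain-rule verification of the Mikhlin condition for $\xi\mapsto f(|\xi|^2)$ --- are both sound. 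What your approach buys is a short, elementary proof; what it costs is any hope of generalisation beyond the full space, which is precisely why the cited references avoid Fourier analysis. Two small points you should make explicit if this were to be written out in full: (i) sectoriality also requires injectivity and dense range of $-\Lh$ on $L^p(\Ri^n,\Lambda)$ (true here, since there are no nonzero $L^p$ harmonic forms on $\Ri^n$), which is needed for the functional calculus of a general $f\in H^\infty(S^\circ_{\mu+})$ rather than just $f$ with decay at $0$ and $\infty$; and (ii) one must check that your multiplier definition of $f(-\Lh)$ agrees with the Cauchy-integral definition --- for $f$ in the subclass $\Psi(S^\circ_{\mu+})$ this follows from Cauchy's formula applied to $z\mapsto f(z)(z-|\xi|^2)^{-1}$, and the general case follows from the convergence lemma, whose hypothesis is exactly the uniform bound your Mikhlin estimate provides.
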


And secondly they verify the maximal regularity property, which is crucial for our proof:
\begin{theorem}[Maximal regularity]\label{Sylvie_disciple}
	Let $1<p,q<\infty$ and let R be the operator defined for $f\in L^1_{{\rm loc}}(]0,\infty[;\CS'(\Ri^n))$ by
	\begin{equation}
		Rf(t)=\int_0^t e^{(t-s)\Lh }f(s)\dd s, \quad \forall t>0.
	\end{equation}
	This operator is bounded from $L^q(\ouvert{0,\infty};L^p(\Ri^n))$ to $\dot{W}^{1,q}(\ouvert{0,\infty};L^p(\Ri^n))\cap L^q(\ouvert{0,\infty};\dot{W}^{2,p}(\Ri^n))$. 
	In particular the operator $\Lh R$ is bounded in $L^q(\ouvert{0,\infty};L^p(\Ri^n))$.
	
	Moreover there exists a constant $C_{q,p}$ such that 
	\begin{equation}\label{regmax}
		\|\frac{\dd}{\dd t}Rf\|_{L^q_tL^p_x}+\|\Lh Rf\|_{L^q_tL^p_x}+\|(-\Lh )^\alpha(\frac{\dd}{\dd t})^{1-\alpha}Rf\|_{L^q_tL^p_x} \le C_{q,p}\|f\|_{L^q_tL^p_x},
	\end{equation}
	for all $\alpha\in \ouvert{0,1}$.
\end{theorem}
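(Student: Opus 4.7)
The plan is to derive the estimate from Theorem \ref{thm:HodgeL&S} by invoking two classical pieces of operator-theoretic machinery: Weis's theorem (maximal $L^q$-regularity $=$ $R$-sectoriality on UMD spaces) and the Dore-Venni / mixed-derivative theorem for the fractional term.

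For the two ``pure'' estimates one notes that $L^p(\Ri^n,\Lambda)$ is UMD for $1<p<\infty$ and, by Theorem \ref{thm:HodgeL&S}, that $-\Lh$ admits a bounded $H^\infty$ calculus there. The Kalton-Weis theorem then upgrades this to $R$-sectoriality of the same angle, and Weis's characterisation yields maximal $L^q$-regularity, i.e.\ $\|\Lh Rf\|_{L^q_tL^p_x}\less\|f\|_{L^q_tL^p_x}$. The estimate on $\frac{\dd}{\dd t} Rf$ is then immediate from the identity $\frac{\dd}{\dd t} Rf = f + \Lh Rf$, which holds pointwise for smooth $f$ and extends to $L^q_tL^p_x$ by density.

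For the fractional term I would apply the mixed-derivative theorem of Prüss-Sohr to the commuting sectorial pair $A=\frac{\dd}{\dd t}$ on $L^q(\ouvert{0,\infty};L^p(\Ri^n,\Lambda))$ (with the natural zero-trace domain) and $B=-\Lh$ acting pointwise in $t$. Both admit bounded imaginary powers whose $H^\infty$-angles sum strictly below $\pi$: angle $\pi/2$ for $A$ via boundedness of the Hilbert transform valued in the UMD space $L^p(\Ri^n,\Lambda)$, and angle $0$ for $B$ by Theorem \ref{thm:HodgeL&S}. The mixed-derivative theorem then gives $\|A^{1-\alpha} B^{\alpha} u\|_{L^q_tL^p_x}\less \|(A+B) u\|_{L^q_tL^p_x}$ for every $\alpha\in\ouvert{0,1}$; applied to $u=Rf$, which by construction satisfies $(A+B)u = f$, this is precisely the claimed bound on $(-\Lh)^\alpha(\frac{\dd}{\dd t})^{1-\alpha} Rf$.

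The main technical obstacle is ensuring that the Dore-Venni framework is set up cleanly in the homogeneous setting on $\ouvert{0,\infty}$: one must verify resolvent commutation of $A$ and $B$, select the correct zero-trace domain for $A$, and check that the $H^\infty$-angles meet the Dore-Venni hypothesis. A safer alternative is to avoid Dore-Venni altogether and obtain the fractional bound by complex interpolation: once the endpoints $\alpha=0$ and $\alpha=1$ are in hand, the intermediate case follows by interpolating the operator $R$ between the two maximal-regularity targets and identifying the resulting space as the domain of $(-\Lh)^\alpha(\frac{\dd}{\dd t})^{1-\alpha}$.
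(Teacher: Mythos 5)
Your argument is correct, but it is genuinely different from the route the paper takes: the paper does not prove Theorem~\ref{Sylvie_disciple} at all, it defers to \cite[Chapter~IV, \S 3]{LSU68}, where the estimate is obtained by the classical potential-theoretic method --- writing $\Lh Rf$ as a parabolic singular integral against the explicit heat kernel and running Calder\'on--Zygmund theory on it. You instead derive everything abstractly from the bounded $H^\infty$ functional calculus of Theorem~\ref{thm:HodgeL&S}: UMD plus $H^\infty$-calculus gives $R$-sectoriality (Kalton--Weis), Weis's characterisation gives $\|\Lh Rf\|_{L^q_tL^p_x}\less\|f\|_{L^q_tL^p_x}$, the identity $\frac{\dd}{\dd t}Rf=f+\Lh Rf$ handles the time derivative, and the Pr\"uss--Sohr mixed-derivative theorem applied to the commuting pair $A=\frac{\dd}{\dd t}$ (zero trace at $t=0$, BIP of angle $\pi/2$ on $L^q(\ouvert{0,\infty};L^p)$ by UMD) and $B=-\Lh$ (BIP of angle $0$) gives the fractional term, since $(A+B)Rf=f$. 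All of these steps are sound; the only points needing care are the ones you flag yourself, namely that $A$ is sectorial but not invertible on the half-line (so one works with the homogeneous estimates throughout, which is exactly what the statement asserts) and that the resolvents of $A$ and $B$ commute (clear, since $-\Lh$ acts pointwise in $t$). The trade-off is instructive: the cited proof is self-contained and elementary for the flat Laplacian on $\Ri^n$, whereas your operator-theoretic proof uses only the functional-calculus input already recorded in the paper and therefore transfers verbatim to Hodge--Stokes and Hodge--Maxwell operators on Riemannian manifolds or rough domains where no explicit kernel is available --- which is precisely the generalisation the introduction advertises. Your closing suggestion to replace Dore--Venni by complex interpolation between the endpoints $\alpha=0,1$ is also workable, but note that identifying the interpolation space as the domain of $(-\Lh)^\alpha(\frac{\dd}{\dd t})^{1-\alpha}$ again requires bounded imaginary powers of both operators, so it does not actually save you any hypotheses.
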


The proof can be found in \cite[Chapter IV, \S 3]{LSU68}.

\subsection{The magnetohydrodynamical system}

Let us recall the magnetohydrodynamical system \eqref{mhd1diff}:
\begin{equation}
\tag{MHD}
	\left\{
	\begin{array}{rclcl}
		\partial_t u+\Sh u+d \pi+ u\lrcorner\,du&=&-d^*b\lrcorner\, b&\mbox{ in }&\ouvert{0,T}\times\Ri^n\\
		\partial_t b+\Mh b&=&-d(u\lrcorner\, b)&\mbox{ in }&\ouvert{0,T}\times\Ri^n\\
		u(t,\cdot)&\in&{\rm{\sf N}}(d^*)_{|_{\Lambda^1}}&\mbox{ for all }&t\in\ouvert{0,T}\\
		b(t,\cdot)&\in&{\rm{\sf R}}(d)_{|_{\Lambda^2}}&\mbox{ for all }&t\in\ouvert{0,T}.\\
	\end{array}
	\right.
\end{equation}

\begin{definition}\label{def_mild}
	A mild solution of the system \eqref{mhd1diff} with initial condition $u_0\in N(d^*)_{|\Lambda^1}$ and $b_0\in R(d)_{|\Lambda^2}$ is a pair $(u,b)$ such that $u$ is $1-$form on $\Ri^n$, $b$ is a $2-$form on $\Ri^n$, and $(u,b)$ satisfies
	\begin{align}
		\label{mildsolmhd1u}
		u(t)=&e^{-t\Sh}u_0+\int_0^te^{-(t-s)\Sh}{\mathbb{P}}\bigl(-u(s)\lrcorner\, du(s)\bigr)\,{\rm d}s
		+\int_0^te^{-(t-s)\Sh}{\mathbb{P}}\bigl(-d^*b(s)\lrcorner\,b(s)\bigr)\,{\rm d}s,
		\\
		=& a_1(t) + B_1(u,u)(t)+B_2(b,b)(t)\\
		\label{mildsolmhd1b}
		b(t)=&e^{-t \Mh}b_0+\int_0^te^{-(t-s) \Mh}\Bigl(-d\bigl(u(s)\lrcorner\,b(s)\bigr)\Bigr)\,{\rm d}s\\
		=&a_2(t)+B_3(u,b)(t)
	\end{align} 
for all $t\in \ouvert{0,T}$.
\end{definition}

In the formalism we use, it is easy to see that the bilinear terms $B_1$ and $B_2$ are almost identical. In fact we will focus on $B_1$ and skip the details for $B_2$ altogether. The bilinear form for the magnetic field $B_3$ is different however, and in sections \ref{CtLn} and \ref{Uniqueness} we will need the following Leibniz-style inequality:
	\begin{lemma}
	Let $\alpha$, $\beta$, $\alpha'$, $\beta'$ and $\gamma$ be such that $\frac{1}{\alpha}+\frac{1}{\beta}=\frac{1}{\alpha'}+\frac{1}{\beta'}=\frac{1}{\gamma}$.
	There exists a constant $C_p$ such that
	
	\begin{equation}\label{Leibniz}
		\|d(\omega_1\lrcorner\,\omega_2)\|_{\gamma}\le 
		C_p \bigl(\|\Dh\omega_1\|_\alpha\|\omega_2\|_\beta +\|\omega_1\|_{\alpha'}\|\Dh\omega_2\|_{\beta'}\bigr),
	\end{equation}
	for all $\omega_1\in {\rm{\sf D}}^\alpha(\Dh)\cap L^{\alpha'}(\Ri^n,\Lambda^1)$ and all
	$\omega_2\in {\rm{\sf D}}^{\beta'}(\Dh)\cap L^{\beta}(\Ri^n,\Lambda^2)$.
\end{lemma}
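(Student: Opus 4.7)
The plan is to combine a pointwise Leibniz expansion of $d(\omega_1\lrcorner\omega_2)$ with the Riesz-transform-type bound $\|\nabla\omega\|_p\le C_p\|\Dh\omega\|_p$, valid on $\Ri^n$ for $1<p<\infty$. Writing $\omega_1=\sum_i u_i e_i$ and $\omega_2=\frac{1}{2}\sum_{i,j}b_{ij}\,e_i\wedge e_j$ with $b_{ij}=-b_{ji}$, and using $e_i\lrcorner(e_j\wedge e_k)=\delta_{ij}e_k-\delta_{ik}e_j$, a short computation gives $(\omega_1\lrcorner\omega_2)_k=\sum_j u_j b_{jk}$, hence
\begin{equation*}
d(\omega_1\lrcorner\omega_2)=\sum_{j,k,\ell}\bigl((\partial_\ell u_j)\,b_{jk}+u_j\,(\partial_\ell b_{jk})\bigr)\,e_\ell\wedge e_k.
\end{equation*}
This yields the pointwise estimate $|d(\omega_1\lrcorner\omega_2)(x)|\le C\bigl(|\nabla\omega_1(x)|\,|\omega_2(x)|+|\omega_1(x)|\,|\nabla\omega_2(x)|\bigr)$, and H\"older's inequality with exponent pairs $(\alpha,\beta)$ and $(\alpha',\beta')$ each summing to $1/\gamma$ delivers
\begin{equation*}
\|d(\omega_1\lrcorner\omega_2)\|_\gamma\le C\bigl(\|\nabla\omega_1\|_\alpha\|\omega_2\|_\beta+\|\omega_1\|_{\alpha'}\|\nabla\omega_2\|_{\beta'}\bigr).
\end{equation*}

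It then remains to replace $\nabla$ by $\Dh$ at the two levels $p=\alpha$ and $p=\beta'$. The classical Riesz transforms $\partial_j(-\Lh)^{-1/2}$ are bounded on $L^p(\Ri^n)$ for $1<p<\infty$ (applied componentwise to forms), so $\|\nabla\omega\|_p\le C_p\|(-\Lh)^{1/2}\omega\|_p$. Since $\Dh^2=-\Lh$, the Fourier symbol of $\Dh(-\Lh)^{-1/2}$ equals $(i\xi\wedge-i\xi\lrcorner)/|\xi|$, which is smooth off the origin and homogeneous of degree $0$ with operator norm on $\Lambda$ equal to $1$; Mikhlin's multiplier theorem then yields the reverse bound $\|(-\Lh)^{1/2}\omega\|_p\le C_p\|\Dh\omega\|_p$. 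Composing the two inequalities gives $\|\nabla\omega\|_p\le C_p\|\Dh\omega\|_p$, and substituting this into the H\"older estimate above produces \eqref{Leibniz}.

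The main (minor) obstacle is the Riesz step itself: one must check that the relevant exponents $\alpha,\beta'$ lie in $\ouvert{1,\infty}$ so that the Mikhlin-type estimates apply, which is implicit in the lemma. A route more internal to the paper would be to derive $\|(-\Lh)^{1/2}\omega\|_p\le C_p\|\Dh\omega\|_p$ from the bounded $H^\infty$-functional calculus of $-\Lh$ together with the Hodge decomposition \eqref{Hp}, rather than from explicit Fourier multiplier computations; either way, no genuine analytical difficulty arises beyond the already-established tools of Section~\ref{tools}.
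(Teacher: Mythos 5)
Your argument is correct and is essentially the paper's own proof: the paper's entire justification is the factorization $\nabla=[\nabla(-\Lh)^{-1/2}][(-\Lh)^{-1/2}\Dh]\Dh$ showing that $\nabla$ is controlled by $\Dh$ on $L^p(\Ri^n)$, which is exactly your Riesz-transform-plus-Mikhlin step, with the pointwise Leibniz expansion and H\"older left implicit. You have merely written out the details (the coordinate computation of $\omega_1\lrcorner\,\omega_2$, the H\"older pairing, and the boundedness of the zero-order multiplier $\Dh(-\Lh)^{-1/2}$) that the paper omits.
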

\begin{proof}
	On $\Ri^n$ we get $-\Delta = \Dh^2$, so $\nabla = [\nabla (-\Delta)^{-1} \Dh] \Dh = [\nabla(-\Delta)^{-1/2}] [(-\Delta)^{-1/2}\Dh] \Dh$. So $\nabla$ is controlled by $\Dh$.
\end{proof}

\begin{remark}
	This estimate is an open question for low-regularity domains and in particular for Lipschitz domains as discussed in \cite{Mo21}.
\end{remark}

\section{Existence in $L^q_tL^p_x$ spaces}\label{LqLp}

In this section we consider solutions which are $L^q$ in time and $L^p$ in space. We prove global existence of those solutions for small initial data and local existence for arbitrary large initial data.

Our proofs are based on the classical Picard fixed point theorem, 
already used for the Navier-Stokes equations by Fujita and Kato \cite{FK64}
(see also \cite{M06}) and in \cite{BM20} (see also \cite{BH20}) for the Boussinesq system. Our most recent inspiration is a paper by Monniaux \cite{Mo21} on the 3-dimensional \eqref{mhd1diff} system.
Most of the tools used here appeared in the paper \cite{MM09b}; see also \cite{McIM18}.

Let us start our main theorems:

\begin{theorem}[Global existence]
	\label{thm:mhd1global}
	Let $(p,q)$ such that $\frac{n}{p}+\frac{2}{q}=1$, $p>n$, and $q>3$.
	
	Then there exists $\varepsilon>0$ such that for all $u_0\in {\rm{\sf N}}^n(d^*)_{|_{\Lambda^1}}$ and 
	$b_0\in {\rm{\sf R}}^n(d)_{|_{\Lambda^2}}$ with 
	\begin{align}
		\|u_0\|_{\dot{B}^{-\frac{2}{q}}_{p,q}} &+ \|u_0\|_{\dot{B}^{-\frac{4}{q}+1}_{\frac{p}{2},\frac{q}{2}}}\le \e \\
		{\rm and } \ \|b_0\|_{\dot{B}^{-\frac{2}{q}}_{p,q}} &+ \|b_0\|_{\dot{B}^{-\frac{4}{q}+1}_{\frac{p}{2},\frac{q}{2}}}\le \e,
	\end{align} 
the system \eqref{mhd1diff} admits a mild solution $(u,b) \in L^q([0,\infty[;L^p(\Ri^n,\Lambda^1))\times L^q([0,\infty[;L^p(\Ri^n,\Lambda^2))$.
\end{theorem}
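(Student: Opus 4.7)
The plan is to apply the Picard/Banach fixed point theorem to the mapping
\[
\Phi(u,b)\;=\;\bigl(\,a_1+B_1(u,u)+B_2(b,b),\;\; a_2+B_3(u,b)\,\bigr)
\]
read off from \eqref{mildsolmhd1u}--\eqref{mildsolmhd1b}, on a small ball in the product Banach space $X:=E_1\times E_2$, where
\[
E_k:=\bigl\{\omega:\;\omega\in L^q(\demi{0,\infty};L^p(\Ri^n,\Lambda^k)),\ \Dh\omega\in L^{q/2}(\demi{0,\infty};L^{p/2}(\Ri^n,\Lambda^k))\bigr\}
\]
is equipped with the natural sum norm. The auxiliary $\Dh$-regularity in $L^{q/2}_tL^{p/2}_x$ is the space in which the products $u\otimes v$, $b\otimes c$, $u\otimes b$ naturally land after H\"older, so that the heat smoothing of $e^{-t\Sh}$, $e^{-t\Mh}$ can bring them back to $E$; it also makes the Leibniz bound \eqref{Leibniz} directly usable for $B_3$.

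The linear part is handled by the standard heat-kernel characterization $\|e^{-t\Lh}f\|_{L^q_tL^p_x}\simeq\|f\|_{\dot B^{-2/q}_{p,q}}$ and, via one half-power of $-\Lh$ plus boundedness of Riesz transforms, its shifted companion $\|\Dh e^{-t\Lh}f\|_{L^{q/2}_tL^{p/2}_x}\simeq\|f\|_{\dot B^{-4/q+1}_{p/2,q/2}}$, together with the remark that $\Sh$ and $\Mh$ coincide with $-\Lh$ on their invariant subspaces. For the bilinear estimates $\|B_j(\cdot,\cdot)\|_X\lesssim\|\cdot\|_X\|\cdot\|_X$, in $B_1$ (and similarly $B_2$) I would rewrite the nonlinearity in divergence form using $d^*u=0$ and the algebra of $\wedge,\lrcorner$, apply ${\mathbb{P}}$ (bounded on $L^{p/2}$), and transfer the single derivative onto the semigroup as a factor $(-\Sh)^{1/2}$. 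The pointwise bound $\|(-\Sh)^{1/2}e^{-\tau\Sh}\|_{L^{p/2}\to L^p}\lesssim\tau^{-1/2-n/(2p)}$ combined with a weak-Young convolution in time then yields the $L^q_tL^p_x$ component, where the scaling $\frac{n}{p}+\frac{2}{q}=1$ is precisely what puts the kernel in weak-$L^{q/(q-1)}$ and bridges $L^{q/2}_t$ to $L^q_t$. The companion $\Dh$-piece in $L^{q/2}_tL^{p/2}_x$ follows from the maximal-regularity estimate of Theorem~\ref{Sylvie_disciple} applied to the same input. For $B_3$ the derivative is already outside as $d(u\lrcorner b)$, so one pulls it out by \eqref{Leibniz} and proceeds by the same machinery.

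Combining the above, $\Phi$ satisfies $\|\Phi(u,b)\|_X\le C\e+C'R^2$ and is $C''R$-Lipschitz on the $X$-ball of radius $R$; picking $R=2C\e$ with $\e$ so small that $C''R<\tfrac12$ and $C'R^2<C\e$, the Banach fixed point theorem delivers a unique fixed point there. The main obstacle is the bilinear step: the single derivative appearing inside the Duhamel integrals of $B_1$ and $B_2$ must be transferred onto the semigroup and then absorbed quantitatively by heat smoothing in the correct time-Lebesgue norm. This is the point where the algebraic identities special to the flat space $\Ri^n$ (as flagged in the remark after the proof of \eqref{Leibniz}), boundedness of the Riesz transforms and of ${\mathbb{P}}$ on $L^{p/2}$, Theorem~\ref{Sylvie_disciple}, and the exact critical scaling $\frac{n}{p}+\frac{2}{q}=1$ with $p>n$, $q>3$ (which guarantees $p/2,q/2>1$ so that H\"older, the Hodge/Riesz operators and weak Young all apply) must all come together.
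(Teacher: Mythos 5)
Your proposal is correct and its overall architecture coincides with the paper's: the same product space ($L^q_tL^p_x$ together with $\Dh$-regularity in $L^{q/2}_tL^{p/2}_x$, i.e.\ the paper's $\CU_T\times\CB_T$ with $T=\infty$), the same Picard scheme, the same heat-flow characterization of $\dot{B}^{-2/q}_{p,q}$ and $\dot{B}^{-4/q+1}_{p/2,q/2}$ for the linear part, and weak Young in time plus the maximal regularity of Theorem~\ref{Sylvie_disciple} for the Duhamel terms. The one genuine divergence is in the $L^q_tL^p_x$ component of $B_1$ (and $B_2$): you rewrite $\Leray(u\contr dv)$ in divergence form and pay for the derivative through $\|(-\Sh)^{1/2}e^{-\tau\Sh}\|_{L^{p/2}\to L^p}\lesssim \tau^{-1/2-n/(2p)}$, so that only the $L^q_tL^p_x$ norms of $u$ and $v$ are consumed; the paper instead leaves the derivative on $v$, places $u\contr dv$ in $L^{p/3}_x$ by H\"older (thus consuming the auxiliary norm $dv\in L^{q/2}_tL^{p/2}_x$) and regularizes with $\Sh^{-\theta}:L^{p/3}\to W^{2\theta,p/3}\hookrightarrow L^p$, $\theta=\tfrac np$. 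The two routes have identical scaling (time kernels $\tau^{-(1-\frac1q)}$ against an $L^{q/2}_t$ input versus $\tau^{-\frac np}$ against an $L^{q/3}_t$ input) and both close under $p>n$, $q>3$; yours is the classical Oseen--Kato route and is slightly more economical, while the paper's avoids rewriting the nonlinearity, which is precisely what keeps its argument portable to Lipschitz domains, where such algebraic manipulations and the attendant Riesz-transform bounds are problematic (as the remark after \eqref{Leibniz} indicates). One point to make explicit if you write this up: for $u\neq v$ the identity $\Leray(u\contr dv)=\Leray\bigl(\divv(\cdots)\bigr)$ holds only for the symmetrized form modulo an exact $1$-form annihilated by $\Leray$; this suffices for the fixed-point iteration (which only needs the symmetric part and differences $B_1(u,u)-B_1(v,v)$) but should be stated.
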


\begin{theorem}[Local existence]
	\label{thm:mhd1local}
	Let $(p,q)$ such that $\frac{n}{p}+\frac{2}{q}=1$, $p>n$, and $q>3$.
	
	Then for all 
	$u_0\in {\rm{\sf N}}^n(d^*)_{|_{\Lambda^1}}$ and 
	$b_0\in {\rm{\sf R}}^n(d)_{|_{\Lambda^2}}$ there exists $T>0$ such that the
	system \eqref{mhd1diff} admits a mild solution 
	$u \in L^q([0,\infty[;L^p(\Ri^n,\Lambda^1))$ and $b\in L^q([0,\infty[;L^p(\Ri^n,\Lambda^2))$.
\end{theorem}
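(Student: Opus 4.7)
The plan is to apply Banach's fixed-point theorem to the same bilinear map
$(u,b)\mapsto (a_1+B_1(u,u)+B_2(b,b),\ a_2+B_3(u,b))$
that appears in the proof of Theorem~\ref{thm:mhd1global}, but on a finite time interval $\ouvert{0,T}$ with $T$ to be fixed small. More precisely, I would work in the Banach space $X_T:=L^q(\ouvert{0,T};L^p(\Ri^n,\Lambda^1))\times L^q(\ouvert{0,T};L^p(\Ri^n,\Lambda^2))$, augmented (if necessary) by whatever auxiliary $L^{q/2}L^{p/2}$-type norm appears in the bilinear bounds of Theorem~\ref{thm:mhd1global}. The whole difference with the global theorem is that the smallness required to close the iteration must now come from shrinking $T$, not from the initial data.

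The first step is to verify that the bilinear estimates established for $B_1$, $B_2$, $B_3$ in the proof of Theorem~\ref{thm:mhd1global} carry over to $X_T$ with constants independent of $T$. Those bounds rely only on maximal regularity (Theorem~\ref{Sylvie_disciple}), H\"older's inequality and the Leibniz-type bound~\eqref{Leibniz}; extending a function from $\ouvert{0,T}$ to $\ouvert{0,\infty}$ by zero leaves every relevant norm unchanged while bringing the full-line estimates directly to bear, so indeed no $T$-dependence is introduced.

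The second step is to show that $\|a_1\|_{X_T}+\|a_2\|_{X_T}\to 0$ as $T\to 0^+$. For the $L^q(L^p)$ component the key input is the Sobolev--Besov embedding $L^n(\Ri^n)\hookrightarrow \dot{B}^{-2/q}_{p,q}(\Ri^n)$ (obtained for instance by factoring $L^n=\dot{F}^0_{n,2}\hookrightarrow \dot{B}^0_{n,n}\hookrightarrow \dot{B}^{-2/q}_{p,n}\hookrightarrow \dot{B}^{-2/q}_{p,q}$ in the range allowed by the hypotheses), combined with the heat-semigroup characterization $\|e^{-tS}u_0\|_{L^q(\ouvert{0,\infty};L^p)}\simeq \|u_0\|_{\dot{B}^{-2/q}_{p,q}}$. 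These together place $t\mapsto\|e^{-tS}u_0\|_{L^p}$ in $L^q(\ouvert{0,\infty})$, so dominated convergence forces its restriction to $\ouvert{0,T}$ to vanish. The same argument applies to $a_2=e^{-t\Mh}b_0$. Combining this with the previous step, I would pick $T$ so small that $\|(a_1,a_2)\|_{X_T}$ is below the smallness threshold dictated by the $T$-uniform bilinear constants, and Picard iteration then produces the desired mild solution $(u,b)\in X_T$.

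The main obstacle I anticipate lies in treating the auxiliary $L^{q/2}(L^{p/2})$-type norm (equivalently the $\dot{B}^{-4/q+1}_{p/2,q/2}$ norm of $u_0$, $b_0$) used in Theorem~\ref{thm:mhd1global}'s bilinear estimates: the analogous embedding $L^n\hookrightarrow \dot{B}^{-4/q+1}_{p/2,q/2}$ need not hold on the entire admissible range of $(p,q)$. When it fails, one must replace the direct embedding by a density-plus-remainder splitting $u_0=u_0^{(1)}+u_0^{(2)}$, with $u_0^{(1)}\in\CS(\Ri^n,\Lambda^1)$ (so that its free evolution has finite $L^{q/2}L^{p/2}$-norm and becomes small on $\ouvert{0,T}$ by dominated convergence) and $u_0^{(2)}$ of small $L^n$ norm (handled by a weak-type heat-kernel bound of the form $\|e^{-tS}u_0^{(2)}\|_{L^p}\lesssim t^{-1/q}\|u_0^{(2)}\|_{L^n}$). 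This standard Fujita--Kato manoeuvre is the only place where the argument truly departs from that of Theorem~\ref{thm:mhd1global}.
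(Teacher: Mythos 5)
Your proposal follows the paper's own argument essentially verbatim: the paper proves Theorems~\ref{thm:mhd1global} and \ref{thm:mhd1local} simultaneously by Picard iteration in the spaces $\CU_T\times\CB_T$ (which carry exactly the auxiliary $L^{q/2}_tL^{p/2}_x$ norms of $du$, $d^*b$ that you anticipate needing), with $T$-independent bilinear bounds (Lemma~\ref{lem_Bilin1}) and smallness of the free evolution for small $T$ (Lemma~\ref{lem:initcond1}) obtained from the Besov-space characterization of the heat semigroup. Your concluding ``density-plus-remainder'' splitting of $u_0$, $b_0$ into Schwartz parts (whose free evolution is made small by shrinking $T$) and remainders of small norm is precisely the second step of the paper's proof of Lemma~\ref{lem:initcond1}, so the approach is the same.
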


\begin{proof}	
	For $0<T\le \infty$, let us consider the spaces 
	\begin{equation}
		\CU_T:=\left\{u\in L^q([0,T[;\Nn^p(d^*)_{|\Lambda^1}); \ du\in L^{\frac{q}{2}}([0,T[;L^{\frac{p}{2}}(\Ri^n,\, \Lambda^2))\right\}
	\end{equation}
	and
	\begin{equation}
		\CB_T:=\left\{b\in L^q([0,T[;\Rr^p(d)_{|\Lambda^2}); \ d^* b\in L^{\frac{q}{2}}([0,T[;L^{\frac{p}{2}}(\Ri^n,\, \Lambda^1))\right\}
	\end{equation}
	endowed with their natural norms
	
	\begin{align*}
		\|u\|_{\CU_T} &= \|u\|_{L^q([0,T[;L^p(\Ri^n,\Lambda^1))}+\|du\|_{L^{\frac{q}{2}}([0,T[;L^{\frac{p}{2}}(\Ri^n,\Lambda^2))} \\
		\|b\|_{\CB_T} &= \|b\|_{L^q([0,T[;L^p(\Ri^n,\Lambda^2))}+\|d^*b\|_{L^{\frac{q}{2}}([0,T[;L^{\frac{p}{2}}(\Ri^n,\Lambda^1))}.
	\end{align*}

The proof relies on the Picard fixed-point theorem (see \cite[Theorem 15.1]{Lem02}): the system
	\begin{align}
		\label{eq:fixedpoint}
		u=a_1+B_1(u,u)+B_2(b,b)\quad\mbox{and}\quad b=a_2+B_3(u,b), \quad (u,b)\in{\mathscr{U}}_T
	\end{align}
	can be reformulated as
	\begin{equation}
		\label{eq:picard1}
		{\boldsymbol{U}}={\boldsymbol{a}}+{\boldsymbol{\cb}}({\boldsymbol{U}},{\boldsymbol{U}})
	\end{equation}
	where ${\boldsymbol{U}}=(u,b)\in {\mathscr{U}}_T\times{\mathscr{B}}_T$, ${\boldsymbol{a}}=(a_1,a_2)$ and 
	${\boldsymbol{\cb}}({\boldsymbol{U}},{\boldsymbol{U'}})=(B_1(u,u')+B_2(b,b'),B_3(u,b'))$ if 
	${\boldsymbol{U}}=(u,b)$ and ${\boldsymbol{U'}}=(u',b')$. On ${\mathscr{U}}_T\times{\mathscr{B}}_T$
	we choose the norm 
	$\|(u,b)\|_{{\mathscr{U}}_T\times{\mathscr{B}}_T}:=\|u\|_{{\mathscr{U}}_T}+\|b\|_{{\mathscr{B}}_T}$.
	
We split the proof into two lemmas: Lemma~\ref{lem:initcond1} concerns the linear part, while Lemma~\ref{lem_Bilin1} concerns the bilinear operator $\boldsymbol{\cb}$.
	\begin{lemma}\label{lem:initcond1}
		For $u_0\in \dot{B}^{-\frac{2}{q}}_{p,q}(\Ri^n,\Lambda^1)\cap \dot{B}^{-\frac{4}{q}+1}_{\frac{p}{2},\frac{q}{2}}(\Ri^n,\Lambda^1)$ with $d^* u_0=0$ and $b_0\in \dot{B}^{-\frac{2}{q}}_{p,q}(\Ri^n,\Lambda^2)\cap\dot{B}^{-\frac{4}{q}+1}_{\frac{p}{2},\frac{q}{2}}(\Ri^n,\Lambda^2)$ with $d b_0=0$, then 
		\begin{enumerate}
			\item $a_1: \, t\mapsto e^{-t\Sh }u_0 \in \CU_T$
			\item $a_2: \, t\mapsto e^{-t\Mh}b_0 \in \CB_T$,
		\end{enumerate}
		for all $T\in ]0,+\infty]$. Besides for all $\varepsilon>0$, there exists $T>0$ such that 
		\begin{equation}\label{a1a2_le_epsilon}
			\|a_1\|_{\CU_T}+\|a_2\|_{\CB_T} \le \varepsilon
		\end{equation} 
	\end{lemma}
	
	\begin{lemma}\label{lem_Bilin1}
		The bilinear operators $B_1$, $B_2$ and $B_3$ are bounded in the following spaces:
		\begin{enumerate}
			\item
			$B_1:\CU_T\times\CU_T \rightarrow \CU_T$, \label{B_1}
			\item
			$B_2:\CB_T\times \CB_T \rightarrow \CU_T$, \label{B_2}
			\item
			$B_3:\CU_T\times \CB_T\rightarrow \CB_T$ \label{B_3}
		\end{enumerate}
		with norms independent from $T>0$.
	\end{lemma}
	
The boundedness of the operator ${\boldsymbol{\cb}}$ is now obvious: let $\boldsymbol{U}=(u,b)\in \CU_T\times\CB_T$ and $\boldsymbol{U'}=(u',b')\in \CU_T\times \CB_T$. Then
\begin{align*}
	\left\| {\boldsymbol{\cb}}({\boldsymbol{U}},{\boldsymbol{U'}}) \right\|_{\CU_T\times\CB_T} &= \left\| B_1(u,u')+B_2(b,b')\right\|_{\CU_T}
	+ \left\| B_3(u,b')\right\|_{\CB_T} \\
	&\le K\left( \|u\|_{\CU_T} \|u'\|_{\CU_T} + \|b\|_{\CB_T}\|b'\|_{\CB_T} + \|u\|_{\CU_T}\|b'\|_{\CB_T} \right) \\
	&\le K \|\boldsymbol{U}\|_{\CU_T\times\CB_T} \|\boldsymbol{U'}\|_{\CU_T\times\CB_T},
\end{align*}
	where $K$ is a constant independent from $T>0$. 
	
	Let then $\varepsilon=\frac{1}{4K}$. By Lemma~\ref{lem:initcond1}, 
	for $u_0\in {\rm{\sf N}}^n(d^*)_{|_{\Lambda^1}}$, and
	$b_0\in {\rm{\sf R}}^n(d)_{|_{\Lambda^2}}$, there exists $T\le \infty$ such that 
	$\|a_1\|_{\CU_T}+\|a_2\|_{\CB_T} \le \varepsilon$ holds for $\varepsilon=\frac{1}{4K}$.
	Then by Picard's fixed point theorem the system \eqref{eq:picard1} 
	admits a unique solution ${\boldsymbol{U}}=(u,b)\in {\mathscr{U}}_T\times{\mathscr{B}}_T$.
\end{proof}
	
\begin{proof}[Lemma~\ref{lem:initcond1}]
	Let $\e>0$. Let $u_0\in \dot{B}^{-\frac{2}{q}}_{p,q}(\Ri^n,\Lambda^1)\cap \dot{B}^{-\frac{4}{q}+1}_{\frac{p}{2},\frac{q}{2}}(\Ri^n,\Lambda^1)$ with $d^* u_0=0$ and $b_0\in \dot{B}^{-\frac{2}{q}}_{p,q}(\Ri^n,\Lambda^2)\cap\dot{B}^{-\frac{4}{q}+1}_{\frac{p}{2},\frac{q}{2}}(\Ri^n,\Lambda^2)$ with $d b_0=0$.
\begin{enumerate}
\item First we prove that the semigroups $t\mapsto a_1(t)=e^{-t\Sh}u_0$ and $t\mapsto a_2(t)=e^{-t\Mh}b_0$ are respectively in $\CU_T$ and $\CB_T$.
	
Let $T=+\infty$. Thanks to \cite[Lemma~2.34]{BCD11} we have
\begin{align*}
	\|u_0\|_{\dot{B}^{-\frac{2}{q}}_{p,q}} &\sim_{p,q} \left\|t\mapsto \|t^\frac{1}{q}e^{t\Lh}u_0\|_{L^p_x}\right\|_{L^q(\demi{0,\infty},\frac{\dd t}{t})} \\
	&\sim_{p,q} \left(\int_{0}^{+\infty}\|e^{t\Delta}u_0\|_{L^p_x}^q\dd t\right)^\frac{1}{q}\\
	&\sim_{p,q} \|t\mapsto e^{-t\Sh}u_0\|_{L^q_tL^p_x}.
\end{align*}
Now, using the fact that $(-\Lh)^\frac{1}{2}: \ \dot{B}^s_{\frac{p}{2},\frac{q}{2}} \rightarrow \dot{B}^{s-1}_{\frac{p}{2},\frac{q}{2}}$ is an isomorphism we get
\begin{equation*}
	\|u_0\|_{\dot{B}^{-\frac{4}{q}+1}_{\frac{p}{2},\frac{q}{2}}} \sim_{p,q} \|(-\Lh)^\frac{1}{2}u_0\|_{\dot{B}^{-\frac{4}{q}}_{\frac{p}{2},\frac{q}{2}}}.
\end{equation*}
Then using \cite[Lemma~2.34]{BCD11} again we get 
\begin{align*}
	\|u_0\|_{\dot{B}^{-\frac{4}{q}+1}_{\frac{p}{2},\frac{q}{2}}} &\sim_{p,q} \left\|t\mapsto \|t^\frac{q}{2}(-\Lh)^{\frac{1}{2}}e^{t\Lh}u_0\|_{L^p_x}\right\|_{L^q(\demi{0,\infty},\frac{\dd t}{t})} \\
	&\sim_{p,q} \| t\mapsto (-\Lh)^\frac{1}{2}e^{t\Lh}u_0 \|_{L^\frac{q}{2}_t L^\frac{p}{2}_x} \\
	&\sim_{p,q} \| de^{-t\Sh}u_0 \|_{L^\frac{q}{2}_t L^\frac{p}{2}_x},
\end{align*}
where the last line comes from the fact that $d^*u_0=0$ and $\| \Dh \cdot \|_\frac{p}{2} \sim \| (-\Lh)^\frac{1}{2}\|_\frac{p}{2}$.

Hence for $T\in \Ri^+$,
\begin{equation}
	\|a_1\|_{\CU_T}\le \|a_1\|_{\CU_{\infty}}\less_{p,q} \|u_0\|_{\dot{B}^{-\frac{q}{2}}_{p,q}} + \|u_0\|_{\dot{B}^{-\frac{4}{q}+1}_{\frac{p}{2},\frac{q}{2}}}.
\end{equation}
The estimate for $a_2$ is proven in a similar way: 
\begin{equation}
	\|a_2\|_{\CU_T} \le \|a_2\|_{\CU_\infty} \less_{p,q} \|b_0\|_{\dot{B}^{-\frac{q}{2}}_{p,q}} + \|b_0\|_{\dot{B}^{-\frac{4}{q}+1}_{\frac{p}{2},\frac{q}{2}}}.
\end{equation}

\item Let $\e>0$. If $u_0$ and $b_0$ have norms smaller than $\|u_0\|_{\dot{B}^{-\frac{q}{2}}_{p,q}} + \|u_0\|_{\dot{B}^{-\frac{4}{q}+1}_{\frac{p}{2},\frac{q}{2}}}$ and $\|b_0\|_{\dot{B}^{-\frac{q}{2}}_{p,q}} + \|b_0\|_{\dot{B}^{-\frac{4}{q}+1}_{\frac{p}{2},\frac{q}{2}}}$ respectively, then we  immediately have $\|a_1\|_{\CU_T}+\|a_2\|_{\CB_T} \le K_{p,q}\, \varepsilon$ for all $T\in [0,\infty]$.

Else let $T\in \Ri^+$ and let $u_0^\e\in \CS(\Ri^n,\Lambda^1)$ and $b_0^\e \in \CS(\Ri^n,\Lambda^2)$ be such that $d^*u_0=0$, $db_0=0$, and 
\begin{align*}
	\|u_0-u_0^\e\|_{\dot{B}^{-\frac{4}{q}+1}_{\frac{p}{2},\frac{q}{2}}} &\le \e \; {\rm and } \; \|u_0-u_0^\e\|_{\dot{B}^{-\frac{2}{q}}_{p,q}} \le \e, \\
	\|b_0-b_0^\e\|_{\dot{B}^{-\frac{4}{q}+1}_{\frac{p}{2},\frac{q}{2}}} &\le \e \; {\rm and } \;	\|b_0-b_0^\e\|_{\dot{B}^{-\frac{2}{q}}_{p,q}} \le \e.
\end{align*}
Let us denote for $t\in [0,T]$ $a_1^\e(t) = e^{-t\Sh}u_0^\e$ and  $a_2^\e(t) = e^{-t\Mh}b_0^\e$, and write
\begin{equation*}
	\|a_1\|_{\CU_T}\le \|a_1-a_1^\e\|_{\CU_T}+\|a_1^\e\|_{\CU_T}
	\le K_{p,q}\, \e + \|a_1^\e\|_{\CU_T}.
\end{equation*}
By definition $\|a_1^\e\|_{\CU_T}=\|a_1^\e\|_{L^q_tL^p_x}+\|d a_1^\e\|_{L^\frac{q}{2}_t L^\frac{p}{2}_x}$.
Let us consider $\| a_1^\e\|_{L^q_tL^p_x}$ first: let $s\in \Ri$ be such that $1-qs>0$. Then we get
\begin{align*}
	\| a_1^\e\|_{L^q_tL^p_x} &\le \| t\mapsto t^s\|_{L^\infty([0,T])} \left\|t\mapsto t^{-s}\|e^{-t\Sh}u_0^\e\|_{L^p_x}\right\|_{L^q([0,T])} \\
	&\less_{p,q} T^s \left\| t \mapsto \|t^{\frac{1-sq}{q}}e^{t\Lh}u_0^\e\|_{L^p_x}\right\|_{L^q([0,T],\frac{\dd t}{t})} \\
	&\less_{p,q} T^s \|u_0^\e\|_{\dot{B}^{-2\frac{1-sq}{q}}_{p,q}}.
\end{align*}
Similarly let $s\in \Ri$ be such that $1-\frac{sq}{2}>0$.
Then 
\begin{align*}
	\|d a_1^\e \|_{L^\frac{q}{2}_t L^{\frac{p}{2}}_x} &\less_{p,q} \left\|t\mapsto e^{t\Lh}(-\Lh)^\frac{1}{2}u_0^\e \right\|_{L^\frac{q}{2}_t L^{\frac{p}{2}}_x} \\
	&\less_{p,q} T^s \| (-\Lh)^\frac{1}{2}u_0^\e \|_{\dot{B}^{-2\frac{2-sq}{q}}_{\frac{p}{2},\frac{q}{2}}} \\
	&\less_{p,q} T^s \| u_0^\e \|_{\dot{B}^{-2\frac{2-sq}{q}+1}_{\frac{p}{2},\frac{q}{2}}}.
\end{align*}
Since $u_0^\e \in \CS(\Ri^n,\Lambda^1)$, both $\| u_0^\e \|_{\dot{B}^{-2\frac{2-sq}{q}+1}_{\frac{p}{2},\frac{q}{2}}}$ and $\|u_0^\e\|_{\dot{B}^{-2\frac{1-sq}{q}}_{p,q}}$ are well-defined and finite, although they can be arbitrarily large depending on $u_0$. However taking $T$ small enough we get 
\begin{equation*}
	\| a_1^\e \|_{\CU_T} \le K_{p,q} \, \e.
\end{equation*}
And in a similar way we can prove that 
\begin{equation*}
	\| a_2^\e \|_{\CB_T} \le K_{p,q} \, \e,
\end{equation*}
which concludes our proof.
\end{enumerate}
\end{proof}

\begin{proof}[Lemma~\ref{lem_Bilin1}]
	Recall the relations on $n$, $p$ and $q$:
	\begin{equation*}
		n<p, \quad	3<q \quad {\rm and} \quad \frac{n}{p}+\frac{2}{q}=1 
	\end{equation*}
	\begin{enumerate}
	\item Recall that $B_1(u,v)(t)=\int_0^t e^{-(t-s)\Sh} \Leray(u(s)\contr dv(s))\dd s$. 
	\begin{itemize}
		\item 
		Let $\theta = \frac{n}{p}$. Then $\frac{3}{p}-\frac{2\theta}{n}=\frac{1}{p}$, so the Sobolev injection $W^{2\theta,\frac{p}{3}}\hookrightarrow L^p$ holds.
		
		For almost every $t>0$, we compute the norm in $L^p_x$ of $B_1(u,v)(t)$ in the following way:
		\begin{align*}
			\left\|B_1(u,v)(t)\right\|_{L^p_x}&=\left\| \int_0^t \Sh ^\theta e^{-(t-s)\Sh } \Sh ^{-\theta}\Leray \left(u(s)\contr dv(s)\right)\dd s \right\|_{L^p_x} \\
		(1) \qquad	&\lesssim \int_0^t \left\| \Sh ^\theta e^{-(t-s)\Sh }\right\|_{L^p\rightarrow L^p} \left\|\Sh ^{-\theta}\Leray \big(u(s)\contr dv(s)\big) \right\|_{L^p_x} \dd s   \\
		(2) \qquad	&\lesssim \int^t_0 (t-s)^{-\theta} \left\|\Sh ^{-\theta}\Leray \big(u(s)\contr dv(s)\big) \right\|_{W^{2\theta,\frac{p}{3}}_x} \dd s \\
		(3) \qquad	&\lesssim \int^t_0 (t-s)^{-\theta} \left\|\Leray \big(u(s)\contr dv(s)\big) \right\|_{L^\frac{p}{3}_x} \dd s \\
		(4)	\qquad &\lesssim \int^t_0 (t-s)^{-\theta} \left\| u(s)\contr dv(s) \right\|_{L^\frac{p}{3}_x} \dd s \\
		(5) \qquad	&\lesssim \int^t_0 (t-s)^{-\theta} \|u(s)\|_{L^p_x}\| dv(s)\|_{L^\frac{p}{2}_x}  \dd s,
		\end{align*}
	 	where (1) uses the operator norm of $\Sh^\theta e^{-(t-s)\Sh}$, (2) uses the Sobolev injection $W^{2\theta,\frac{p}{3}}\hookrightarrow L^p$, (3) uses the continuity of $\Sh^{-\theta}$ from $L^\frac{p}{3}_x$ to $W^{2\theta,\frac{p}{3}}$, (4) uses the continuity of the Leray projector $\Leray$ on $L^\frac{p}{3}_x$ and finally (5) is simply Hölder's inequality.
	 
		Since $s\mapsto s^{-\theta}=s^{-\frac{n}{p}}$ is in $L^{\frac{p}{n},\infty}$ (see \cite[Definition~1.1.5]{Gr08}) and $s\mapsto  \|u(s)\|_{L^p_x}\| dv(s)\|_{L^\frac{p}{2}_x}$ is in $L^\frac{q}{3}_t$ by Hölder's inequality, the convolution inequality $\|f\star g\|_{L^q}\less_{n,p,q} \|f\|_{L^{\frac{p}{n},\infty}}\|g\|_{L^\frac{q}{3}}$ (see \cite[Theorem~1.2.13
		]{Gr08}) yields 
		\begin{equation}
			\left\|B_1(u,v)\right\|_{L^q_t L^p_x}\less_{n,p,q} \, \| u\|_{\CU_T} \|v\|_{\CU_T}.
		\end{equation}
		
	\item	We now compute the norm of $dB_1(u,v)$. 
		Let $\theta=\frac{n}{2p}$ be such that $\frac{3}{p}-\frac{2\theta}{n}=\frac{2}{p}$, so that the Sobolev injection $W^{2\theta, \frac{p}{3}}\hookrightarrow L^\frac{p}{2}$ holds.
		Then, following the same steps as for $\| B_1(u,v)(t)\|_{L^p_x}$, we get:
		\begin{align*}
			\left\|dB_1(u,v)(t)\right\|_{L^\frac{p}{2}_x}&=\left\| \int_0^t d \Sh ^\theta  e^{-(t-s)\Sh } \Sh ^{-\theta}\Leray \left(u(s)\contr dv(s)\right)\dd s \right\|_{L^\frac{p}{2}_x} \\
	(1) \qquad		&\less \int_0^t \left\| d \Sh ^\theta e^{-(t-s)\Sh }\right\|_{L^\frac{p}{2}\rightarrow L^\frac{p}{2}} \left\|\Sh ^{-\theta}\Leray \big(u(s)\contr dv(s)\big) \right\|_{L^\frac{p}{2}_x} \dd s   \\
	(2) \qquad		&\less \int^t_0 (t-s)^{-\theta-\frac{1}{2}} \left\|\Sh ^{-\theta}\Leray \big(u(s)\contr dv(s)\big) \right\|_{W^{2\theta,\frac{p}{3}}_x} \dd s \\
	(3) \qquad		&\less \int^t_0 (t-s)^{-\theta-\frac{1}{2}} \left\|\Leray \big(u(s)\contr dv(s)\big) \right\|_{L^\frac{p}{3}_x} \dd s \\
	(4) \qquad		&\less \int^t_0 (t-s)^{-\theta-\frac{1}{2}} \left\|u(s)\contr dv(s) \right\|_{L^\frac{p}{3}_x} \dd s \\
	(5) \qquad		&\less \int^t_0 (t-s)^{-\frac{n+p}{2p}} \|u(s)\|_{L^p_x}\| dv(s)\|_{L^\frac{p}{2}_x}  \dd s,
		\end{align*}
	where (1) uses the operator norm of $\Sh^\theta e^{-(t-s)\Sh}$, (2) uses the Sobolev injection $W^{2\theta,\frac{p}{3}}\hookrightarrow L^\frac{p}{2}$, (3) uses the continuity of $\Sh^{-\theta}$ from $L^\frac{p}{3}_x$ to $W^{2\theta,\frac{p}{3}}$, (4) uses the continuity of the Leray projector $\Leray$ on $L^\frac{p}{3}_x$ and finally (5) is simply (again!) Hölder's inequality.

	Since $s\mapsto s^{-\frac{n+p}{2p}}$ is in $L^{\frac{2p}{n+p},\infty}$ (see \cite[Definition~1.1.5]{Gr08}) and $s\mapsto  \|u(s)\|_{L^p_x}\| dv(s)\|_{L^\frac{p}{2}_x}$ is in $L^\frac{q}{3}_t$ by Hölder's inequality, the convolution inequality $\|f\star g\|_{L^\frac{q}{2}}\less_{n,p,q} \|f\|_{L^{\frac{2p}{n+p},\infty}}\|g\|_{L^\frac{q}{3}}$ (see \cite[Theorem~1.4.24]{Gr08}) yields  
		\begin{equation}
			\left\|dB_1(u,v)\right\|_{L^q_t L^p_x}\lesssim \| u\|_{\CU_T} \|v\|_{\CU_T}.
		\end{equation}
		And combining both estimates yields 
		\begin{equation}
			\left\|B_1(u,v)\right\|_{\CU_T}\lesssim \| u\|_{\CU_T} \|v\|_{\CU_T}.
		\end{equation}
	\end{itemize}
	\item The boundedness of $B_2:\CB_T\times \CB_T \rightarrow \CU_T$ is proved in the exact same way.
	\item 
	\begin{itemize}
		\item The estimates on $B_3(u,b)$ is obtained in a similar way: taking $\theta=\frac{n}{2p}$ we get
		\begin{align*}
			\|B_3(u,b)(t)\|_{L^p_x}&=\left\| \int_0^t  \Mh^\theta e^{-(t-s) \Mh}\Mh^{-\theta}\Big(-d\big(u(s)\contr b(s)\big)\Big)\dd s \right\|_{L^p_x} \\
(1) \qquad	&=\left\| \int_0^t  d \Sh^\theta e^{-(t-s) \Sh}\Sh^{-\theta} \big(u(s)\contr b(s)\big)\Big)\dd s \right\|_{L^p_x} \\
(2) \qquad	&\lesssim \int^t_0 \left\|d\Sh^\theta e^{-(t-s)\Sh}\right\|_{L^p\rightarrow L^p} \|\Sh^{-\theta}u(s)\contr b(s)\|_{L^p_x} \dd s   \\
(3) \qquad	&\lesssim \int^t_0 (t-s)^{-\theta-\frac{1}{2}} \left\| \Sh^{-\theta} u(s)\contr b(s)\right\|_{W^{2\theta,\frac{p}{2}}_x} \dd s. \\
(4) \qquad	&\lesssim \int^t_0 (t-s)^{-\frac{p+n}{2p}} \|u(s)\contr b(s)\|_{L^\frac{p}{2}_x}  \dd s. \\
(5) \qquad	&\lesssim \int^t_0 (t-s)^{-\frac{p+n}{2p}} \|u(s)\|_{L^p_x} 		\|b(s)\|_{L^p_x} \dd s,
		\end{align*}
	where (1) uses the fact that $\Mh d=d\Sh$, (2) uses the operator norm of $d\Sh^\theta e^{-(t-s)\Sh}$, (3) uses the Sobolev injection $W^{2\theta,\frac{p}{2}}\hookrightarrow L^p$, (4) uses the continuity of $\Sh^{-\theta}$ from $L^\frac{p}{2}_x$ to $W^{2\theta,\frac{p}{2}}$, and (5) is again Hölder's inequality.
	
	Then as before \cite[Theorem~1.4.24]{Gr08} gives us
		\begin{equation}
			\left\|B_3(u,b)\right\|_{L^q_t L^p_x}\lesssim \| u\|_{\CU_T} \|b\|_{\CB_T}.
		\end{equation}
		
		\item To compute $\|d^* B_3(u,b)(t)\|_\frac{p}{2}$, we can then apply the maximal regularity theorem: 
		\begin{align*}
			\|d^* B_3(u,b)\|_{L^\frac{q}{2}_t L^\frac{p}{2}_x} &= \left\| t\mapsto \int_0^t d^*e^{-(t-s)\Mh}\Big(-d\big(u(s)\contr b(s)\big)\Big)\dd s \right\|_{L^\frac{q}{2}_t L^\frac{p}{2}_x}\\
		(1)\qquad	&=\left\| t\mapsto \int_0^t \Sh e^{-(t-s)\Sh}\big(-u(s)\contr b(s)\big)\dd s \right\|_{L^\frac{q}{2}_t L^\frac{p}{2}_x}\\
		(2)\qquad	&\lesssim \| u\contr b\|_{L^\frac{q}{2}_t L^\frac{p}{2}_x} \\
		(3)\qquad	&\lesssim \|u\|_{L^q_t L^p_x} \|b\|_{L^q_t L^p_x} \\
		&\lesssim \| u\|_{\CU_T} \|b\|_{\CB_T},
		\end{align*}
	where (1) uses $\Mh d=d\Sh$, (2) is Theorem~\ref{Sylvie_disciple} applied to $\Sh$, and (3) is Hölder's inequality.
		\end{itemize}
	\end{enumerate}
	This last estimate concludes our proof.
\end{proof}

\section{Existence in $\CC_tL^p_x$ spaces}\label{CtLn}

In this section we prove the global and local existence of mild solutions for the magnetohydrodynamic system \eqref{mhd1diff}, following closely \cite{Mo21}. The method is roughly the same as in the last section - using Picard's fixed point theorem and maximal regularity - with the main difference being that we rely heavily on the Leibnitz estimate \eqref{Leibniz}, which makes it difficult to generalize our results to low-regularity domains. However, contrary to the $L^q_tL^p_x$ case, we were able to prove the uniqueness of mild solutions of the system \eqref{mhd1diff} in Section \ref{Uniqueness}.

Let us start by stating our two theorems:
\begin{theorem}[Global existence]
	\label{thm:mhd1global_2}
	There exists $\varepsilon>0$ such that for all $u_0\in {\rm{\sf N}}^n(d^*)_{|_{\Lambda^1}}$ and 
	$b_0\in {\rm{\sf R}}^n(d)_{|_{\Lambda^2}}$ with $\|u_0\|_{L^n(\Ri^n,\Lambda^1)}+\|b_0\|_{L^n(\Ri^n,\Lambda^2)}\le \varepsilon$, the
	system \eqref{mhd1diff} admits a 
	mild solution 
	$u \in \CC([0,\infty[;L^n(\Ri^n,\Lambda^1))$ and $b\in \CC([0,\infty[;L^n(\Ri^n,\Lambda^2))$.
\end{theorem}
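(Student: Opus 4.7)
The plan is to repeat the Picard fixed-point argument of Section~\ref{LqLp} in a functional setting adapted to the critical index $L^n(\Ri^n)$. Exactly as before, the system is recast as ${\boldsymbol U}={\boldsymbol a}+{\boldsymbol{\cb}}({\boldsymbol U},{\boldsymbol U})$ on $\CU_T\times\CB_T$, and the proof splits into a linear lemma (bounding $a_1$ and $a_2$ in the working space) and a bilinear lemma (bounding $B_1, B_2, B_3$ with constants independent of $T$). Once both lemmas are in place, the theorem follows from Picard's theorem under a smallness condition on $\|u_0\|_{L^n}+\|b_0\|_{L^n}$.

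The main design choice is the Banach space itself. A natural candidate is a Kato-type space combining $\CC([0,T[; L^n(\Ri^n))$ with an auxiliary norm of critical scaling, for instance $\sup_{t>0}t^{(1-n/p)/2}\|u(t)\|_{L^p_x}$ for some $p>n$, or $L^q([0,T[;L^p(\Ri^n))$ with $\frac{n}{p}+\frac{2}{q}=1$ (after a density argument to handle the fact that $L^n$ embeds only into the weak $L^{q,\infty}_t L^p_x$ space). With such a norm, $\|a_1\|_{\CU_T}\lesssim\|u_0\|_{L^n}$ holds trivially on the $\CC_t L^n$ part by contractivity of $e^{-t\Sh}$, and on the auxiliary part by scaling together with the bounded $H^\infty$-calculus of $\Sh$ furnished by Theorem~\ref{thm:HodgeL&S}; an analogous estimate applies to $a_2$. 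For $B_1$ and $B_2$ I would recycle the Section~\ref{LqLp} strategy: extract one derivative from the heat semigroup via the operator bound $\|\Dh e^{-\tau\Sh}\|_{L^p\to L^p}\lesssim \tau^{-1/2}$, then apply Sobolev embedding and weak-Young convolution inequalities to the resulting integral. The term $B_3(u,b)(t)=-\int_0^t e^{-(t-s)\Mh}d(u(s)\lrcorner b(s))\dd s$ is the delicate one: using the commutation identity $\Mh d=d\Sh$ and the Maximal Regularity Theorem~\ref{Sylvie_disciple}, its norm in the working space is reduced to an estimate for $u\lrcorner b$ in a suitable $L^q_t L^p_x$-type norm with one derivative, and the Leibniz inequality~\eqref{Leibniz} then distributes that derivative as cross-terms $\|\Dh u\|_\alpha\|b\|_\beta+\|u\|_{\alpha'}\|\Dh b\|_{\beta'}$ with Hölder exponents matched to the chosen space.

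The main obstacle is the joint calibration of the auxiliary exponents $(p,q)$: they must simultaneously make the linear-part embedding from $L^n$ work, make $B_1, B_2$ close via Sobolev–Young, and match the exponents forced by the Leibniz estimate in the $B_3$ bound. A secondary technical point is the strong continuity $t\mapsto B_i(\cdot,\cdot)(t)\in L^n$, which will follow from dominated convergence once the kernel estimates above are established, exploiting the strong continuity of $e^{-t\Sh}$ and $e^{-t\Mh}$ on $L^n$. With all exponents correctly calibrated, $\boldsymbol{\cb}$ is bounded with some constant $K>0$ independent of $T$, and Picard's theorem yields the desired global mild solution whenever $\|u_0\|_{L^n}+\|b_0\|_{L^n}\le\varepsilon:=1/(4K)$.
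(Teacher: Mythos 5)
Your overall architecture matches the paper's: a Picard fixed-point argument in a Kato-type weighted space, split into a linear lemma for $a_1,a_2$, a bilinear lemma for $B_1,B_2,B_3$ with $T$-independent constants, and a final lemma recovering $\CC([0,T[;L^n)$ regularity. However, there are two concrete gaps. First, the working space you propose carries only the norm $\sup_{t}t^{\alpha/2}\|u(t)\|_{L^p_x}$ (with $\alpha=1-\tfrac{n}{p}$), but the nonlinearities here are $u\lrcorner\,du$, $d^*b\lrcorner\,b$ and $d(u\lrcorner\,b)$: every bilinear estimate you describe consumes a derivative of one of the \emph{inputs} ($dv$ in $B_1$, $d^*b$ in $B_2$, and both $du$ and $d^*b$ through the Leibniz inequality \eqref{Leibniz} in $d^*B_3$). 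Unless you rewrite the nonlinearities in divergence form (which neither you nor the paper does), the space must also control $\sup_t t^{(1+\alpha)/2}\|du(t)\|_{L^p_x}$ and $\sup_t t^{(1+\alpha)/2}\|d^*b(t)\|_{L^p_x}$, exactly as in the paper's definitions \eqref{eq:normUT}--\eqref{eq:normBT}; without these components your own recycled Section~\ref{LqLp} strategy cannot even be started, and the fixed point would not propagate the derivative bounds needed at the next iteration. This is not a calibration issue to be deferred but part of the definition of the space.

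Second, your plan for $B_3$ invokes the maximal regularity Theorem~\ref{Sylvie_disciple}, which is an $L^q_t L^p_x$ statement with $q<\infty$ and does not apply to the sup-in-time norms of this section ($L^\infty$-in-time maximal regularity fails for the heat semigroup). The paper instead estimates $d^*B_3(u,b)(t)$ pointwise in time, using the commutation $\Mh d=d\Sh$ only implicitly, the operator decay $\|d^*\Mh^\theta e^{-\tau\Mh}\|_{L^p\to L^p}\lesssim\tau^{-\theta-\frac12}$, the Sobolev embedding $W^{2\theta,\frac p2}\hookrightarrow L^p$ with $\theta=\frac{n}{2p}$, the Leibniz inequality, and the beta-function bound $\int_0^t s^{-a}(t-s)^{-b}\,\dd s\lesssim t^{1-a-b}$ for $a,b<1$ (which is where the constraint $p<2n$ enters). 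Your mention of dominated convergence for the continuity into $L^n$ is consistent with the paper's Lemma~\ref{lem:UTBTmild}, and the final smallness argument $\varepsilon=1/(4K)$ is the same; but as written the proposal does not close.
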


\begin{theorem}[Local existence]
	\label{thm:mhd1local_2}
	For all 
	$u_0\in {\rm{\sf N}}^n(d^*)_{|_{\Lambda^1}}$ and 
	$b_0\in {\rm{\sf R}}^n(d)_{|_{\Lambda^2}}$ there exists $T>0$ such that the
	system \eqref{mhd1diff} admits a mild solution 
	$u \in \CC(\demi{0,T};L^n(\Ri^n,\Lambda^1))$ and $b\in \CC(\demi{0,T};L^n(\Ri^n,\Lambda^2))$.
\end{theorem}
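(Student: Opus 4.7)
The plan is to reuse the Picard fixed-point framework of Theorem~\ref{thm:mhd1global_2}. I would work in Banach spaces $\mathscr{X}_T,\mathscr{Y}_T$ combining the $\CC(\demi{0,T};L^n)$ topology with an auxiliary Kato-type weighted norm of the form $\sup_{0<t<T}t^\alpha\|\cdot\|_{L^p}$ for a suitable $p>n$ and $\alpha=\tfrac12(1-\tfrac{n}{p})$, and reformulate \eqref{mhd1diff} as the fixed-point equation $\boldsymbol{U}=\boldsymbol{a}+\boldsymbol{\cb}(\boldsymbol{U},\boldsymbol{U})$ with $\|\boldsymbol{\cb}(\boldsymbol{U},\boldsymbol{U'})\|\le K\|\boldsymbol{U}\|\,\|\boldsymbol{U'}\|$. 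Since the bilinear constant $K$ (produced in the proof of Theorem~\ref{thm:mhd1global_2} via maximal regularity and the Leibniz inequality \eqref{Leibniz}) is independent of $T$, the only task new to the local statement is to prove that for arbitrary data $(u_0,b_0)$ the linear part $(a_1,a_2)$ has norm $\le 1/(4K)$ once $T$ is taken small enough.

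The $L^\infty_t L^n_x$ piece of the norm cannot be shrunk: by strong continuity of the semigroup, $\|a_1(t)\|_{L^n}\to\|u_0\|_{L^n}$ as $t\to 0^+$. So the bilinear estimates supplied by Theorem~\ref{thm:mhd1global_2} must close using smallness of only the Kato-type auxiliary norm, the $\CC_tL^n_x$-continuity of the fixed point being inherited from that of $a_1,a_2$ together with the smoothing gained by $B_1,B_2,B_3$. The core of the argument then reduces to proving
\begin{equation*}
\lim_{T\to 0^+}\,\sup_{0<t<T} t^\alpha\,\|e^{-t\Sh}u_0\|_{L^p}=0
\end{equation*}
and the analogous statement with $\Mh$ and $b_0$.

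This last limit is obtained by a classical density/scaling split. Pick $u_0^\varepsilon\in\CS(\Ri^n,\Lambda^1)\cap\Nn^n(d^*)_{|\Lambda^1}$ with $\|u_0-u_0^\varepsilon\|_{L^n}\le\varepsilon$, constructed by applying the Helmholtz projection $\Leray$ from \eqref{HpRd} to a Schwartz approximant of $u_0$ (using that $\Leray$ is bounded on $L^n$). On the rough piece use the scaling-invariant bound $\sup_{0<t<T}t^\alpha\|e^{-t\Sh}(u_0-u_0^\varepsilon)\|_{L^p}\less\|u_0-u_0^\varepsilon\|_{L^n}\le\varepsilon$, and on the smooth piece the crude bound $\sup_{0<t<T}t^\alpha\|e^{-t\Sh}u_0^\varepsilon\|_{L^p}\le CT^\alpha\|u_0^\varepsilon\|_{L^p}\to 0$ as $T\to 0$, valid because Schwartz data lie in every $L^p$. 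An identical argument handles $a_2$, using $\Rr^n(d)_{|\Lambda^2}$-valued Schwartz approximants of $b_0$ produced via \eqref{HpNd}. Fixing $\varepsilon$ small and then $T$ small delivers $\|a_1\|_{\mathscr{X}_T}+\|a_2\|_{\mathscr{Y}_T}\le 1/(4K)$, and Picard's theorem yields the desired mild solution in $\mathscr{X}_T\times\mathscr{Y}_T$. The main subtlety lies in calibrating the auxiliary Kato exponent so that it is simultaneously scaling-invariant (required for the rough-piece bound) and strong enough to close the bilinear estimates; the choice $\alpha=\tfrac12(1-\tfrac{n}{p})$ is precisely what reconciles both conditions.
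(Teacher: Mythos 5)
Your proposal follows essentially the same route as the paper: a Kato-type auxiliary space with weight $t^{\frac{1}{2}(1-\frac np)}$ on the $L^p$ norm, $T$-independent bilinear estimates inherited from the global theorem, a density/scaling split to make the linear part small as $T\to 0^+$ (smooth approximant bounded by $T^{\alpha}\|u_0^\e\|_{L^p}$, rough remainder by the scaling-invariant $L^n\to L^p$ smoothing bound), and Picard, with continuity in $L^n$ recovered afterwards -- this is exactly the paper's Lemmas~\ref{lem:initcond2}, \ref{lem_Bilin2} and \ref{lem:UTBTmild}. The one thing to add: the working spaces must also carry the weighted derivative norms $\sup_t t^{\frac{1+\alpha}{2}}\|du(t)\|_{L^p}$ and $\sup_t t^{\frac{1+\alpha}{2}}\|d^*b(t)\|_{L^p}$ (the paper's $\CU_T$, $\CB_T$), since the nonlinearities $u\lrcorner\,dv$ and, via the Leibniz inequality \eqref{Leibniz}, $d^*B_3(u,b)$ cannot be closed on the $L^p$ component alone; correspondingly your density argument must also be applied to $\sup_{0<t<T}t^{\frac{1+\alpha}{2}}\|d\,e^{-t\Sh}u_0\|_{L^p}$, which works verbatim because $\|d\,e^{-t\Sh}u_0^\e\|_{L^p}\less t^{-1/2}\|u_0^\e\|_{L^p}$ for the Schwartz approximant.
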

	
Let $p\in \ouvert{n,2n}$ and $\alpha=1-\frac{n}{p}$, and define the following Banach spaces for $0<T\le +\infty$:
	
	\begin{align}
		\label{eq:UT}
		\CU_T:=&\bigl\{u\in{\mathscr{C}}(\ouvert{0,T};{\rm{\sf N}}^p(d^*)_{|_{\Lambda^1}});
		du\in{\mathscr{C}}(\ouvert{0,T};L^p(\Ri^n,\Lambda^2)) :\\
		\nonumber
		&\sup_{0<t<T}\bigl(t^{\frac{\alpha}{2}}\|u(t)\|_{L^p_x}+t^{\frac{1+\alpha}{2}}\|du(t)\|_{L^p_x}\bigr)<\infty\bigr\},
	\end{align}
	endowed with the norm
		\begin{equation}
		\label{eq:normUT}
		\|u\|_{\CU_T}:=\sup_{0<t<T}\bigl(t^{\frac{\alpha}{2}}\|u(t)\|_{L^p_x}
		+t^{\frac{1+\alpha}{2}}\|du(t)\|_{L^p_x}\bigr),
	\end{equation}
	and
	\begin{align}
		\label{eq:BT}
		\CB_T:=&\bigl\{b\in{\mathscr{C}}(\ouvert{0,T};{\rm{\sf R}}^p(d)_{|_{\Lambda^2}}) ;
		d^*b\in{\mathscr{C}}(]0,T[;L^p(\Ri^n,\Lambda^1)) :\\
		\nonumber
		&\sup_{0<t<T}\bigl(t^{\frac{\alpha}{2}}\|b(t)\|_{L^p_x}+t^{\frac{1+\alpha}{2}}\|d^*b(t)\|_{L^p_x}\bigr)<\infty\bigr\},
	\end{align}
	endowed with the norm
	\begin{equation}
		\label{eq:normBT}
		\|b\|_{\CB_T}:=\sup_{0<t<T}\bigl(t^{\frac{\alpha}{2}}\|b(t)\|_{L^p_x}
		+t^{\frac{1+\alpha}{2}}\|d^*b(t)\|_{L^p_x}\bigr).
	\end{equation}

	We split the proof into three lemmas: in Lemma \ref{lem:initcond2} we study the action of the Stokes and Maxwell semi-group on initial data $u_0\in \Nn^n(d^*)_{|\Lambda^1}$ and $b_0\in \Rr^n(d)_{|\Lambda^2}$.
	In Lemma \ref{lem_Bilin2}, we prove bilinear estimates on $B_1$, $B_2$ and $B_3$, and in Lemma \ref{lem:UTBTmild} we show that solutions from the working spaces $\CU_T$ and $\CB_T$ are in fact continuous on $\demi{0,T}$ and in $L^n$ in space. 
	
	\begin{lemma}
	\label{lem:initcond2}
	For $u_0\in {\rm{\sf N}}^n(d^*)_{|_{\Lambda^1}}$ and 
	$b_0\in {\rm{\sf R}}^n(d)_{|_{\Lambda^2}}$, we have
	\begin{enumerate}
		\item
		$a_1:t\mapsto e^{-t\Sh}u_0\in\CU_T$,
		\item
		$a_2:t\mapsto e^{-t\Mh}b_0\in\CB_T$,
	\end{enumerate}
	for all $T>0$
	Moreover, for all $\varepsilon>0$, there exists $T>0$ such that
	\begin{equation}
		\label{eq:a1a2}
		\|a_1\|_{\CU_T}+\|a_2\|_{\CB_T}\le \varepsilon.
	\end{equation}
\end{lemma}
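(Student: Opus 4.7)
The plan is to first establish uniform-in-$T$ bounds of the form $\|a_1\|_{\CU_T} \lesssim \|u_0\|_{L^n}$ and $\|a_2\|_{\CB_T} \lesssim \|b_0\|_{L^n}$, and then obtain the smallness statement \eqref{eq:a1a2} by a density argument with Schwartz data.

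For the first assertion, the key ingredient is the standard $L^n \to L^p$ regularizing estimate for the heat semigroup, valid for $\Sh = -\Lh$ restricted to $\Nn^p(d^*)_{|\Lambda^1}$ thanks to Theorem \ref{thm:HodgeL&S}. This gives
\[
\|e^{-t\Sh}u_0\|_{L^p} \lesssim t^{-\frac{n}{2}(\frac{1}{n}-\frac{1}{p})}\|u_0\|_{L^n} = t^{-\alpha/2}\|u_0\|_{L^n},
\]
so $t^{\alpha/2}\|e^{-t\Sh}u_0\|_{L^p} \lesssim \|u_0\|_{L^n}$ uniformly in $t$. For the derivative contribution, split $de^{-t\Sh} = d\,e^{-(t/2)\Sh}\circ e^{-(t/2)\Sh}$ and use the gradient bound $\|de^{-s\Sh}\|_{L^p\to L^p}\lesssim s^{-1/2}$ (which follows from the operator norm of $\Sh^{1/2}e^{-s\Sh}$ together with the equivalence $\|\Dh\cdot\|_{L^p}\sim\|(-\Lh)^{1/2}\cdot\|_{L^p}$ on $\Nn^p(d^*)_{|\Lambda^1}$ already exploited in the previous section). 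Combining gives $\|de^{-t\Sh}u_0\|_{L^p}\lesssim t^{-1/2}\cdot t^{-\alpha/2}\|u_0\|_{L^n}$, and the same reasoning applied to $\Mh = dd^*$ on $\Rr^p(d)_{|\Lambda^2}$ yields $\|a_2\|_{\CB_T}\lesssim\|b_0\|_{L^n}$.

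For \eqref{eq:a1a2}, proceed by approximation as in Lemma \ref{lem:initcond1}. Given $\varepsilon > 0$, pick $u_0^\varepsilon \in \CS(\Ri^n,\Lambda^1)$ with $d^*u_0^\varepsilon = 0$ and $b_0^\varepsilon \in \CS(\Ri^n,\Lambda^2)$ with $db_0^\varepsilon = 0$ (for instance by projecting truncation/mollification of the data with the Leray-type projections associated to \eqref{HpRd} and \eqref{HpNd}) such that $\|u_0 - u_0^\varepsilon\|_{L^n}+\|b_0 - b_0^\varepsilon\|_{L^n} \le \varepsilon$. Setting $a_1^\varepsilon(t) = e^{-t\Sh}u_0^\varepsilon$, the uniform bound from the previous paragraph gives
\[
\|a_1 - a_1^\varepsilon\|_{\CU_T} \lesssim \|u_0 - u_0^\varepsilon\|_{L^n} \le \varepsilon.
\]
For the smooth data, $\|e^{-t\Sh}u_0^\varepsilon\|_{L^p}$ and $\|de^{-t\Sh}u_0^\varepsilon\|_{L^p}$ remain uniformly bounded by constants $C(u_0^\varepsilon)$ depending on $\|u_0^\varepsilon\|_{L^p}$ and $\|du_0^\varepsilon\|_{L^p}$, so
\[
\|a_1^\varepsilon\|_{\CU_T} \le (T^{\alpha/2} + T^{(1+\alpha)/2})\, C(u_0^\varepsilon) \xrightarrow[T\to 0^+]{} 0.
\]
Choosing $T$ small enough gives $\|a_1^\varepsilon\|_{\CU_T} \le \varepsilon$, and the identical argument for $a_2^\varepsilon$ concludes the proof.

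The main technical point is the $L^p$-bound $\|de^{-s\Sh}\|_{L^p\to L^p} \lesssim s^{-1/2}$: once this (already tacitly available in the paper through Theorem \ref{thm:HodgeL&S} and the identity $\nabla = [\nabla(-\Lh)^{-1/2}][(-\Lh)^{-1/2}\Dh]\Dh$ used in the Leibniz lemma) is in hand, both the uniform estimate and the smallness statement are essentially forced by dimensional counting with the exponent $\alpha = 1 - n/p$. The rest is standard approximation by Schwartz forms compatible with the divergence-free / range constraints.
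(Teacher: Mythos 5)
Your proof is correct and follows essentially the same route as the paper: a uniform-in-$T$ bound $\|a_1\|_{\CU_T}+\|a_2\|_{\CB_T}\lesssim \|u_0\|_{L^n}+\|b_0\|_{L^n}$ coming from the $L^n\to L^p$ smoothing and the $t^{-1/2}$ gradient estimate for the semigroups, followed by approximation of the data by more regular constrained forms for which the weighted norms are $O(T^{\alpha/2})$ as $T\to 0$. The only cosmetic difference is that the paper takes the approximants $u_0^\varepsilon, b_0^\varepsilon$ merely in $\Nn^p(d^*)_{|_{\Lambda^1}}$ and $\Rr^p(d)_{|_{\Lambda^2}}$ (which suffices) rather than in the Schwartz class.
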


As in Section \ref{LqLp} the second lemma gives us estimates for the bilinear operator:
	\begin{lemma}\label{lem_Bilin2}
	The bilinear operators $B_1$, $B_2$ and $B_3$ are bounded in the following
	spaces:
	\begin{enumerate}
			\item
			$B_1:\CU_T\times\CU_T \rightarrow \CU_T$,
			\item
			$B_2:\CB_T\times \CB_T \rightarrow \CU_T$,
			\item
			$D:\CU_T\times \CB_T\rightarrow \CB_T$
		\end{enumerate}
		with norms independent from $T>0$.
	\end{lemma}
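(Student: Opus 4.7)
The plan is to mirror Lemma~\ref{lem_Bilin1}, replacing the Young/convolution inequalities that suit $L^q_tL^p_x$ by a single Beta-integral identity
\[
\int_0^t(t-s)^{-a}\,s^{-b}\,\dd s \,=\, t^{1-a-b}\,B(1-a,1-b), \qquad a,b<1,
\]
combined with the weighted pointwise bounds
\[
\|u(s)\|_{L^p_x}\le s^{-\alpha/2}\|u\|_{\CU_T}, \qquad \|du(s)\|_{L^p_x}\le s^{-(1+\alpha)/2}\|u\|_{\CU_T},
\]
and their analogues for $b\in\CB_T$, where $\alpha=1-n/p\in\ouvert{0,1/2}$ since $p\in\ouvert{n,2n}$. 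The semigroup ingredients are the standard heat estimates $\|e^{-\tau\Sh}\|_{L^{p/2}\to L^p}\lesssim\tau^{-n/(2p)}$ and $\|de^{-\tau\Sh}\|_{L^{p/2}\to L^p}\lesssim\tau^{-n/(2p)-1/2}$, together with identical bounds for $\Mh$ and $d^*$, all of which follow from the bounded holomorphic functional calculus of Theorem~\ref{thm:HodgeL&S}.

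For $B_1(u,v)(t)=\int_0^t e^{-(t-s)\Sh}\Leray(u(s)\lrcorner dv(s))\,\dd s$, I first apply Hölder to the nonlinearity: $\|u\lrcorner dv\|_{L^{p/2}}\le\|u\|_{L^p}\|dv\|_{L^p}\le s^{-1/2-\alpha}\|u\|_{\CU_T}\|v\|_{\CU_T}$. The semigroup bound and the Beta identity then give $\|B_1(u,v)(t)\|_{L^p}\lesssim \|u\|_{\CU_T}\|v\|_{\CU_T}\,t^{1-n/(2p)-1/2-\alpha}$; the clean identity $\alpha/2=1/2-n/(2p)$ makes the exponent exactly $-\alpha/2$, yielding the $t^{\alpha/2}$-weighted bound. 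The estimate on $t^{(1+\alpha)/2}\|dB_1(u,v)(t)\|_{L^p}$ is identical, with one extra factor $(t-s)^{-1/2}$ from $de^{-(t-s)\Sh}$; convergence of the Beta integral requires $n/(2p)+1/2<1$ (i.e.\ $p>n$) and $1/2+\alpha<1$ (i.e.\ $p<2n$), matching exactly the standing hypothesis. The argument for $B_2$ is literally the same after swapping the roles of $(u,dv)$ for $(d^*b,b')$.

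For $B_3(u,b)(t)=-\int_0^t e^{-(t-s)\Mh}d(u(s)\lrcorner b(s))\,\dd s$, Hölder by itself is insufficient since the integrand already carries a derivative. I use instead the Leibniz inequality \eqref{Leibniz} with $\gamma=p/2$ and all four other exponents equal to $p$:
\[
\|d(u\lrcorner b)\|_{L^{p/2}}\lesssim\|\Dh u\|_{L^p}\|b\|_{L^p}+\|u\|_{L^p}\|\Dh b\|_{L^p}=\|du\|_{L^p}\|b\|_{L^p}+\|u\|_{L^p}\|d^*b\|_{L^p},
\]
using $\Dh u=du$ (since $d^*u=0$) and $\Dh b=d^*b$ (since $db=0$). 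Each product is again controlled by $s^{-1/2-\alpha}\|u\|_{\CU_T}\|b\|_{\CB_T}$, so applying in turn $\|e^{-\tau\Mh}\|_{L^{p/2}\to L^p}$ and $\|d^*e^{-\tau\Mh}\|_{L^{p/2}\to L^p}$ and the same Beta-integral machinery yields the $t^{\alpha/2}$ and $t^{(1+\alpha)/2}$ weighted bounds on $B_3$ and $d^*B_3$ respectively.

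The main obstacle is the elementary but delicate bookkeeping: in each of the six sub-estimates one must check that the exponent $1-a-b$ produced by the Beta integral coincides exactly with minus the prescribed weight and that the conditions $a,b<1$ are equivalent to $n<p<2n$. Both reductions follow from the identities $\alpha/2=1/2-n/(2p)$ and $(1+\alpha)/2=1-n/(2p)$, and since no $T$ appears in any of the final exponents the constants are automatically independent of $T$.
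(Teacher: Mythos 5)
Your proof is correct and follows essentially the same route as the paper: weighted pointwise bounds coming from the $\CU_T$ and $\CB_T$ norms, $L^{p/2}\to L^p$ parabolic smoothing (which the paper implements by factoring through $\Sh^{\pm\theta}$, $\theta=\tfrac{n}{2p}$, and a Sobolev embedding rather than quoting the heat-semigroup bound directly), and a Beta-integral computation whose exponent bookkeeping matches the paper's line by line, including the convergence conditions encoding $n<p<2n$. The only divergence is minor: for the $L^p$ bound on $B_3$ itself the paper commutes $d$ through the semigroup (via $\Mh^{\theta} e^{-\tau\Mh}d = d\Sh^{\theta} e^{-\tau\Sh}$) and uses plain H\"older on $u\lrcorner\, b$, reserving the Leibniz estimate \eqref{Leibniz} for the $d^*B_3$ term only, whereas you invoke \eqref{Leibniz} for both sub-estimates of $B_3$ --- both versions close with the same exponents.
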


Our last Lemma gives us additional regularity for mild solutions of \eqref{mhd1diff}:
\begin{lemma}
	\label{lem:UTBTmild}
	Let $T>0$.
	Assume that $(u,b)\in {\mathscr{U}}_T\times{\mathscr{B}}_T$ is a mild solution of
	\eqref{mhd1diff} with initial conditions 
	$u_0\in {\rm{\sf N}}^n(d^*)_{|_{\Lambda^1}}$ and 
	$b_0\in {\rm{\sf R}}^n(d)_{|_{\Lambda^2}}$. Then
	$u\in {\mathscr{C}}_b([0,T[;{\rm{\sf N}}^n(d^*)_{|_{\Lambda^1}})$ and
	$b\in {\mathscr{C}}_b([0,T[;{\rm{\sf R}}^n(d)_{|_{\Lambda^2}})$.
\end{lemma}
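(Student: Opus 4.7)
\textbf{Proof plan for Lemma \ref{lem:UTBTmild}.}
The goal is to upgrade the mild solution from the weighted scale $\CU_T\times\CB_T$ to the stronger space $\CC_b(\demi{0,T};L^n)$ by direct estimates on the mild formula. I decompose the argument into three steps: a uniform $L^n$ bound on the bilinear pieces, continuity on the open interval $\ouvert{0,T}$, and continuity at $t=0$.

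\emph{Step 1 (uniform $L^n$ bound).} The key ingredient is the smoothing estimate $\|e^{-\tau\Sh}\|_{L^{p/2}\to L^n}\less \tau^{-(1/2-\alpha)}$, which follows from $\frac{n}{2}\bigl(\frac{2}{p}-\frac{1}{n}\bigr)=\frac{1}{2}-\alpha$ (with $\alpha=1-n/p$). Combined with H\"older's inequality $\|u(s)\contr dv(s)\|_{L^{p/2}}\le \|u(s)\|_{L^p}\|dv(s)\|_{L^p}$ and the defining weights of $\CU_T$, this gives
\begin{equation*}
\|B_1(u,v)(t)\|_{L^n}\less \|u\|_{\CU_T}\|v\|_{\CU_T}\int_0^t (t-s)^{-(1/2-\alpha)}s^{-(1/2+\alpha)}\dd s = C\|u\|_{\CU_T}\|v\|_{\CU_T},
\end{equation*}
uniformly in $t$ (the exponents sum to $1$, so the integral is a pure Beta function). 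Analogous bookkeeping handles $B_2(b,b)$ and $B_3(u,b)$ (for the latter, one uses $\|de^{-\tau\Sh}\|_{L^{p/2}\to L^n}\less \tau^{-(1-\alpha)}$ together with the commutation $e^{-\tau\Mh}d=de^{-\tau\Sh}$); together with the $L^n$-contractivity $\|e^{-t\Sh}u_0\|_{L^n}\le \|u_0\|_{L^n}$, this establishes $u,b\in L^\infty(\ouvert{0,T};L^n)$.

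\emph{Step 2 (continuity on $\ouvert{0,T}$).} For $t_0\in\ouvert{0,T}$ and small $h>0$ I use the identity
\begin{equation*}
u(t_0+h)-u(t_0) = (e^{-h\Sh}-I)u(t_0) + \int_{t_0}^{t_0+h}e^{-(t_0+h-s)\Sh}\Leray\bigl(-u(s)\contr du(s)-d^*b(s)\contr b(s)\bigr)\dd s.
\end{equation*}
The first term tends to $0$ in $L^n$ by strong continuity of the Stokes semigroup applied to $u(t_0)\in L^n$ (from Step~1); the boundary integral vanishes because the singular weight $s^{-(1/2+\alpha)}$ is bounded on $[t_0,t_0+h]$ while the kernel singularity contributes a factor $h^{1/2+\alpha}\to 0$. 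The symmetric treatment for $h<0$ and the analogous identity for $b$ (with $\Mh$ in place of $\Sh$) yield continuity on $\ouvert{0,T}$.

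\emph{Step 3 (continuity at $t=0$; the main obstacle).} Decompose
\begin{equation*}
u(t)-u_0 = (e^{-t\Sh}u_0-u_0) + B_1(u,u)(t) + B_2(b,b)(t).
\end{equation*}
The linear term tends to $0$ in $L^n$ by strong continuity. The crucial refinement is that the Step~1 estimate localizes: the same Beta-function computation yields $\|B_1(u,v)(t)\|_{L^n}\less \|u\|_{\CU_t}\|v\|_{\CU_t}$, with norms now taken over $\ouvert{0,t}$. Hence it suffices to prove that $\omega(t):=\|u\|_{\CU_t}+\|b\|_{\CB_t}\to 0$ as $t\to 0^+$. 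The mild equation and Lemma \ref{lem_Bilin2} yield $\omega(t)\le \sigma(t)+K\omega(t)^2$, where $\sigma(t):=\|a_1\|_{\CU_t}+\|a_2\|_{\CB_t}\to 0$ by Lemma \ref{lem:initcond2}. Since $\omega$ is continuous and nondecreasing, $L:=\omega(0^+)$ exists and satisfies $L(1-KL)\le 0$, leaving the dichotomy $L=0$ or $L\ge 1/K$. The main obstacle is ruling out the upper branch: this is handled by invoking that the mild solution under consideration is the one produced by the Picard construction of Theorem \ref{thm:mhd1global_2} or \ref{thm:mhd1local_2}, for which $\|(u,b)\|_{\CU_T\times\CB_T}<1/K$ by construction, so $\omega(t)<1/K$ throughout $\ouvert{0,T}$ and $L=0$ is forced. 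Then $\|B_1(u,u)(t)\|_{L^n}+\|B_2(b,b)(t)\|_{L^n}\less \omega(t)^2\to 0$, whence $u(t)\to u_0$ in $L^n$; the argument for $b$ is symmetric.
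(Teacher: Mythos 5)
Your Step 1 is exactly the paper's argument: the paper takes $\varphi=\frac{n}{p}-\frac{1}{2}$ (which equals your $\frac12-\alpha$), uses the Sobolev embedding $W^{2\varphi,\frac p2}\hookrightarrow L^n$ together with the weighted norms of $\CU_T$, $\CB_T$, and observes that the resulting Beta integral has total exponent $-\alpha-\frac12-\varphi+1=0$, giving a $t$-uniform $L^n$ bound on $B_1$, $B_2$, $B_3$; your smoothing-estimate phrasing is the same computation. Where you go beyond the paper is Steps 2 and 3: the paper dismisses continuity with "the continuity in $0$ is then straightforward," whereas you supply the semigroup identity for interior continuity and a genuine argument at $t=0$. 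One caveat on Step 3: your resolution of the dichotomy $L=0$ or $L\ge 1/K$ invokes the smallness $\|(u,b)\|_{\CU_T\times\CB_T}<1/K$ of the Picard iterate, so strictly speaking you prove continuity at $t=0$ only for the fixed-point solution, not for an arbitrary mild solution in $\CU_T\times\CB_T$ as the lemma is literally stated --- and indeed, for a general element of $\CU_T$ the quantity $\omega(t)=\|u\|_{\CU_t}+\|b\|_{\CB_t}$ need not tend to $0$, so some such additional input is unavoidable. Since the lemma is only ever applied to the solution produced by Theorems \ref{thm:mhd1global_2} and \ref{thm:mhd1local_2}, this restriction is harmless in context, and your proof is more honest about the issue than the paper's; it would be worth either adding the hypothesis $\lim_{t\to 0^+}\bigl(\|u\|_{\CU_t}+\|b\|_{\CB_t}\bigr)=0$ to the statement or noting explicitly that the lemma is used only for the constructed solution.
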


\begin{proof}[Proof of Theorems~\ref{thm:mhd1global_2} and \ref{thm:mhd1local_2}]
	The system
	\begin{align}
		\label{eq:fixedpoint}
		u=a_1+B_1(u,u)+B_2(b,b)\quad\mbox{and}\quad b=a_2+B_3(u,b), \quad (u,b)\in{\mathscr{U}}_T
	\end{align}
	can be reformulated as
	\begin{equation}
		\label{eq:picard}
		{\boldsymbol{u}}={\boldsymbol{a}}+{\boldsymbol{\cb}}({\boldsymbol{u}},{\boldsymbol{u}})
	\end{equation}
	where ${\boldsymbol{u}}=(u,b)\in {\mathscr{U}}_T\times{\mathscr{B}}_T$, ${\boldsymbol{a}}=(a_1,a_2)$ and 
	${\boldsymbol{B}}({\boldsymbol{u}},{\boldsymbol{v}})=(B_1(u,v)+B_2(b,b'),B_3(u,b'))$ if 
	${\boldsymbol{u}}=(u,b)$ and ${\boldsymbol{v}}=(v,b')$. On ${\mathscr{U}}_T\times{\mathscr{B}}_T$
	we choose the norm 
	$\|(u,b)\|_{{\mathscr{U}}_T\times{\mathscr{B}}_T}:=\|u\|_{{\mathscr{U}}_T}+\|b\|_{{\mathscr{B}}_T}$.
	As in section \ref{LqLp}, one can easily check, using Lemma~\ref{lem_Bilin2}, that
	\[
	\|{\boldsymbol{\cb}}({\boldsymbol{u}},{\boldsymbol{v}})\|_{{\mathscr{U}}_T\times{\mathscr{B}}_T}
	\le C \|{\boldsymbol{u}}\|_{{\mathscr{U}}_T\times{\mathscr{B}}_T} 
	\|{\boldsymbol{v}}\|_{{\mathscr{U}}_T\times{\mathscr{B}}_T}
	\]
	where $C$ is a constant independent from $T>0$. We can then apply Picard's fixed point theorem
	to prove that for $u_0\in {\rm{\sf N}}^n(d^*)_{|_{\Lambda^1}}$ and 
	$b_0\in {\rm{\sf R}}^n(d)_{|_{\Lambda^2}}$, with $T\le \infty$ such that 
	\eqref{eq:a1a2} holds for $\varepsilon=\frac{1}{4C}$, the system \eqref{eq:picard} 
	admits a unique solution ${\boldsymbol{u}}=(u,b)\in {\mathscr{U}}_T\times{\mathscr{B}}_T$.
	By Lemma~\ref{lem:UTBTmild}, this provides a mild solution 
	$(u,b)\in{\mathscr{C}}_b(\demi{0,T};{\rm{\sf N}}^n(d^*)_{|_{\Lambda^1}})\times
	{\mathscr{C}}_b(\demi{0,T};{\rm{\sf R}}^n(d)_{|_{\Lambda^2}})$
	of \eqref{mhd1diff}.
\end{proof}

\begin{proof}[Proof of Lemma~\ref{lem:initcond2}]
	Let $\varepsilon>0$ and let $u_0\in \Nn^n(d^*)_{\Lambda^1}$ and $b_0\in \Rr^n(d)_{\Lambda^2}$.
	By Theorem \ref{thm:HodgeL&S}, the semi-group $e^{-t\Sh}$ and $e^{-t\Mh}$ are bounded and there exists constants $c_{\alpha,p}^S$ and $c_{\alpha,p}^M$ such that for all $T>0$
	\begin{equation}\label{eq:données_petites}
		\| a_1\|_{\CU_T}+\|a_2\|_{\CB_T}\le c_{\alpha,p}^S\|u_0\|_{L^p_x} +c_{\alpha,p}^M\|b_0\|_{L^p_x}. 
	\end{equation}
	Hence if $\|u_0\|_{L^p_x}$ and $\|b_0\|_{L^p_x}$ are small enough, the inequality \eqref{eq:a1a2} $\|a_1\|_{\CU_T}+\|a_2\|_{\CB_T}\le \varepsilon$ holds.

	For any $u_0\in \Nn^n(d^*)_{\Lambda^1}$ and $b_0 \in \Rr^n(d)_{\Lambda^2} $, with arbitrary norms, let $u_0^\e\in \Nn^p(d^*)_{\Lambda^1}$ and $b_0^\e \in \Rr^p(d)_{\Lambda^2} $ be such that 
	\begin{align*}
		\|u_0-u_0^\e \|_{L^n_x} &\le \e \\
		\|b_0-b_0^\e \|_{L^n_x} &\le \e.
	\end{align*}
Let us write $a_1^\e(t)=e^{-t\Sh}u_0^\e$ and $a_2^\e(t)=e^{-t\Mh}b_0^\e$. 
Then 
\begin{equation}
	\|a_1\|_{\CU_T}\le \| a_1-a_1^\e\|_{\CU_T} + \|a_1^\e\|_{\CU_T}.
\end{equation}
Since $\Sh$ generates a bounded semi-group, $\| a_1-a_1^\e\|_{\CU_T}\le K_{\alpha,p} \e$.

and $ \|a_1^\e\|_{\CU_T}=\sup_{0<t<T}\bigl(t^{\frac{\alpha}{2}}\|e^{-t\Sh}u_0^\e \|_{L^p_x}
+t^{\frac{1+\alpha}{2}}\|de^{-t\Sh}u_0^\e\|_{L^p_x}\bigr)\le K_{\alpha,p} T^\frac{\alpha}{2}\|u_0^\e\|_{L^p_x}$. 

We get the same estimates for $b_0$, and combining them together we get
\begin{equation*}
	\| a_1\|_{\CU_T}+\|a_2 \|_{\CB_T}\le \e K_{\alpha,p}\left( T^\frac{\alpha}{2} (\|u_0^\e\|_{L^p_x}+\|b_0^\e\|_{L^p_x})+\e\right).
\end{equation*}
Choosing $T$ small enough, such that $T^\frac{\alpha}{2} (\|u_0^\e\|_{L^p_x}+\|b_0^\e\|_{L^p_x})\le \e$, we get
\begin{equation}
	\| a_1\|_{\CU_T}+\|a_2 \|_{\CB_T}\le 2K_{\alpha,p}\, \e.
\end{equation} 
\end{proof}

The proof of Lemma~\ref{lem_Bilin2} proceeds similarly to the proof of Lemma~\ref{lem_Bilin1}, except for the third estimate $B_3$.
\begin{proof}[Proof of Lemma~\ref{lem_Bilin2}]
	Recall that $\alpha=1-\frac{n}{p}$.
	\begin{enumerate}
		\item Let $\theta=\frac{1-\alpha}{2}$. Then $\frac{2}{p}-\frac{2\theta}{n}=\frac{1}{p}$, so the Sobolev inclusion $W^{2\theta,\frac{p}{2}}\hookrightarrow L^p$ holds.
		 For $u,v\in\CU_T$, by definition of $\CU_T$,
		$s\mapsto s^{\frac{1}{2}+\alpha}u(s)\lrcorner\,dv(s) 
		\in {\mathscr{C}}_b(\demi{0,T};L^{\frac{p}{2}}(\Ri^n,\Lambda^1)$
		with bounded $L^\infty$-norm in time.
		The Leray projector $\Leray$ is bounded from 
		$L^{\frac{p}{2}}(\Ri^n,\Lambda^1)$ to ${\rm{\sf N}}^{\frac{p}{2}}(d^*)_{|_{\Lambda^1}}$, and for $\theta>0$ the operator $\Sh^{-\theta}$ is bounded from $W^{2\theta,\frac{p}{2}}$ to $L^{\frac{p}{2}}$.

		Let $t\in \ouvert{0,T}$. Then we get:
		\begin{align*}
			\left\|B_1(u,v)(t)\right\|_{L^p_x}&=\left\| \int_0^t s^{-\alpha-\frac{1}{2}}\Sh^\theta e^{-(t-s)\Sh} \Sh^{-\theta}\Leray \left(-s^{\frac{\alpha}{2}}u(s)\contr s^{\frac{\alpha+1}{2}}dv(s)\right)\dd s \right\|_{L^p_x} \\
(1)\qquad	&\lesssim \int_0^t s^{-\alpha-\frac{1}{2}}\left\|\Sh^\theta e^{-(t-s)\Sh}\right\|_{L^p\rightarrow L^p} \left\| \Sh^{-\theta}\Leray \left(-s^{\frac{\alpha}{2}}u(s)\contr s^{\frac{\alpha+1}{2}}dv(s)\right)\right\|_{L^p_x} \dd s \\
(2)\qquad	&\lesssim \int_0^t s^{-\alpha-\frac{1}{2}}(t-s)^{-\theta} s^\frac{\alpha}{2} \|u(s)\|_{L^p_x} \ s^\frac{\alpha+1}{2} \|dv(s)\|_{L^\frac{p}{2}_x} \dd s \\
(3)\qquad	&\lesssim \left(\int_0^t s^{-\alpha-\frac{1}{2}}(t-s)^{-\theta} \dd s \right) \|u\|_{\CU_T}\|v\|_{\CU_T} \\
(4)\qquad	&\lesssim t^{-\alpha-\frac{1}{2}-\theta+1}\int^1_0 \sigma^{-\alpha-\frac{1}{2}}(1-\sigma)^{-\theta} \dd \sigma \|u\|_{\CU_T}\|v\|_{\CU_T} \\
(5)\qquad	&\less t^{-\frac{\alpha}{2}}\|u\|_{\CU_T}\|v\|_{\CU_T},
		\end{align*}
		where (1) uses the operator norm of $S^\theta e^{-(t-s)\Sh}$, (2) uses successively the Sobolev injection $W^{2\theta,\frac{p}{2}}\hookrightarrow L^p$, the continuity of $S^{-\theta}$ from $W^{2\theta,\frac{p}{2}}$ to $L^{\frac{p}{2}}$, the continuity of the Leray projector $\Leray$ and the Hölder inequality - the same steps as for Lemma~\ref{lem_Bilin1}. (3) uses simply the definition of the $\CU_T$ norm and (4) and (5) are straightforward integral computations - since $n<p<2n$, both $\alpha+\frac{1}{2}$ and $\theta$ are strictly lower than $1$.
	
		This gives us our first estimate $\sup_{0<t<T}\bigl(t^{\frac{\alpha}{2}}\|B_1(u,v)(t)\|_{L^p_x}\bigl)<+\infty$. 
		
		The second estimate proceeds similarly: taking $\theta=\frac{n}{2p}=\frac{1-\alpha}{2}$ as before, we get
		\begin{align*}
			\left\|dB_1(u,v)(t)\right\|_{L^p_x}&=\left\| \int_0^t s^{-\alpha-\frac{1}{2}}d \Sh^\theta  e^{-(t-s)\Sh} \Sh^{-\theta}\Leray \left(-s^{\frac{\alpha}{2}}u(s)\contr s^{\frac{\alpha+1}{2}}dv(s)\right)\dd s \right\|_{L^p_x} \\
			&\lesssim \left(\int_0^t s^{-\alpha-\frac{1}{2}}(t-s)^{-\theta-\frac{1}{2}} \dd s \right) \|u\|_{\CU_T}\|v\|_{\CU_T} \\
			&\lesssim t^{-\frac{1+\alpha}{2}}\|u\|_{\CU_T}\|v\|_{\CU_T},
		\end{align*}
		with a multiplicative constant independent from $T$. 
		
		This gives us our second estimate $\sup_{0<t<T}\bigl(t^{\frac{1+\alpha}{2}}\|dB_1(u,v)(t)\|_{L^p_x}\bigl)<+\infty$
		\item As for Lemma~\ref{lem_Bilin1}, the proof of point 2. proceeds exactly as in the previous point.
		\item Let $u\in \CU_T$ and $b\in \CB_T$, and set again $\theta=\frac{n}{2p}=\frac{1-\alpha}{2}$.
		Taking the $L^p$ norm of $B_3(u,b)(t)$ now yields
		\begin{align*}
			\|B_3(u,b)(t)\|_{L^p_x}&=\left\| \int_0^t s^{-\alpha} \Mh^\theta e^{-(t-s) \Mh}s^{\alpha}\Mh^{-\theta}\Bigl(-d\bigl(u(s)\contr b(s)\bigr)\Bigr)\,{\rm d}s \right\|_{L^p_x} \\
			&\lesssim \int_0^t s^{-\alpha} \left\| d \Sh^\theta e^{-(t-s)\Sh} \right\|_{L^p\rightarrow L^p}  \left\|\Sh^{-\theta}\bigl(s^\frac{\alpha}{2}u(s)\contr s^\frac{\alpha}{2}b(s)\bigr)\right\|_{L^p_x} \dd s  \\
			&\lesssim \left(\int^t_0 s^{-\alpha}(t-s)^{-\theta-\frac{1}{2}} {\rm d}s\right) \|u\|_{\CU_T}\|b\|_{\CB_T}  \\
			&\lesssim t^{-\frac{\alpha}{2}} \|u\|_{\CU_T}\|b\|_{\CB_T}.
		\end{align*}
For the last term $d^*B_3(u,v)$ we use the Leibniz inequality \eqref{Leibniz} to get 
\begin{equation}
	\left\|d\bigl(u(s)\lrcorner\,b(s)\bigr)\right\|_{\frac{p}{2}}\less \|u(s)\|_{L^p_x}\|d^*b(s)\|_{L^p_x} + \|b(s)\|_{L^p_x}\|du(s)\|_{L^p_x}.
\end{equation}
		We can now use this estimate to compute $\|d^*B_3(u,b)(t)\|_{L^p_x}$, using the same methods as before: 
		
		\begin{align*}
			\|d^*B_3(u,b)(t)\|_{L^p_x}&=\left\| \int_0^t s^{-\alpha-\frac{1}{2}} d^* \Mh^\theta e^{-(t-s) \Mh}s^{\alpha+\frac{1}{2}} \Mh^{-\theta}\Bigl(-d\bigl(u(s)\lrcorner\,b(s)\bigr)\Bigr)\,{\rm d}s \right\|_{L^p_x} \\
			&\lesssim  \int_0^t s^{-\alpha-\frac{1}{2}} \left\|d^* \Mh^\theta e^{-(t-s) \Mh}\right\|_{L^p\rightarrow L^p} s^{\alpha+\frac{1}{2}} \left\| \Mh^{-\theta}\Bigl(-d\bigl(u(s)\lrcorner\,b(s)\bigr)\Bigr)\right\|_{L^p_x}\,{\rm d}s  \\
			&\lesssim  \int_0^t s^{-\alpha-\frac{1}{2}}  (t-s)^{-\theta-\frac{1}{2}}  s^{\alpha+\frac{1}{2}}\left\|\Mh^{-\theta}\Bigl(-d\bigl(u(s)\lrcorner\,b(s)\bigr)\Bigr)\right\|_{W^{2\theta,\frac{p}{2}}}\,{\rm d}s  \\
			&\lesssim  \int_0^t s^{-\alpha-\frac{1}{2}} (t-s)^{-\theta-\frac{1}{2}}  s^{\alpha+\frac{1}{2}} \left\|-d\bigl(u(s)\lrcorner\,b(s)\bigr)\right\|_{\frac{p}{2}}\,{\rm d}s  \\
			&\lesssim  \int_0^t s^{-\alpha-\frac{1}{2}}  (t-s)^{-\theta-\frac{1}{2}}  s^{\alpha+\frac{1}{2}} \left(\|u(s)\|_{L^p_x}\|d^*b(s)\|_{L^p_x} + \|b(s)\|_{L^p_x}\|du(s)\|_{L^p_x}\right) \,{\rm d}s  \\
			&\lesssim \left(\int^t_0 s^{-\alpha-\frac{1}{2}}(t-s)^{-\theta-\frac{1}{2}} {\rm d}s\right) \|u\|_{\CU_T}\|b\|_{\CB_T}  \\
			&\lesssim t^{-\frac{1+\alpha}{2}}\|u\|_{\CU_T}\|b\|_{\CB_T}
		\end{align*}
	\end{enumerate}
	Which concludes our proof of Lemma~\ref{lem_Bilin2}.
\end{proof}

\begin{proof}[Proof of Lemma~\ref{lem:UTBTmild}]
	To prove this lemma, first observe that if $u_0\in {\rm{\sf N}}^n(d^*)_{|_{\Lambda^1}}$ and 
	$b_0\in {\rm{\sf R}}^n(d)_{|_{\Lambda^2}}$, then for all $T>0$,
	$t\mapsto e^{-t\Sh}u_0 \in {\mathscr{C}}_b(\demi{0,T};{\rm{\sf N}}^n(d^*)_{|_{\Lambda^1}})$
	and $t\mapsto e^{-t\Mh}b_0\in  {\mathscr{C}}_b(\demi{0,T};{\rm{\sf R}}^n(d)_{|_{\Lambda^2}})$.
	It remains to show that if $u\in{\mathscr{U}}_T$ and $b\in{\mathscr{B}}_T$, then
	$B_1(u,u)\in {\mathscr{C}}_b(\demi{0,T};{\rm{\sf N}}^n(d^*)_{|_{\Lambda^1}})$, 
	$B_2(b,b)\in {\mathscr{C}}_b(\demi{0,T};{\rm{\sf N}}^n(d^*)_{|_{\Lambda^1}})$
	and $B_3(u,b)\in {\mathscr{C}}_b(\demi{0,T};{\rm{\sf R}}^n(d)_{|_{\Lambda^2}})$.
	
	To prove boundedness, we use the same method as in the previous lemma:
	
	recall that $\alpha=1-\frac{n}{p}$ and chose $\varphi=\frac{n}{p}-\frac{1}{2}$, so that $W^{2\varphi,\frac{p}{2}}\hookrightarrow L^n$. Then we can proceed similarly: 
	\begin{align*}
		\left\|B_1(u,v)(t)\right\|_n&=\left\| \int_0^t s^{-\alpha-\frac{1}{2}}\Sh^\varphi e^{-(t-s)\Sh} \Sh^{-\varphi}\Leray \left(-s^{\frac{\alpha}{2}}u(s)\contr s^{\frac{\alpha+1}{2}}du(s)\right)\dd s \right\|_{L^n_x} \\
		&\lesssim \left(\int_0^t s^{-\alpha-\frac{1}{2}}(t-s)^{-\varphi} \dd s \right) \|u\|_{\CU_T}^2 \\
		&\lesssim t^{-\alpha-\frac{1}{2}-\varphi+1}\int^1_0 \sigma^{-\alpha-\frac{1}{2}}(1-\sigma)^{-\varphi} \dd \sigma \|u\|_{\CU_T}^2\\
		&\lesssim \|u\|_{\CU_T}^2.
	\end{align*}
	The continuity in $0$ is then straightforward, and the terms $B_2$ and $B_3$ can be treated similarly.
\end{proof}

\section{Uniqueness}\label{Uniqueness}

\begin{theorem}\label{thm:uniqueness}
	Let $T\in [0,\infty]$ and assume there exist two solutions $(u_i,b_i)$, $i=1,2$ of \eqref{mhd1diff} with the same initial data $(u_0,b_0)$, and such that 
	\begin{align*}
		d u_i &\in{\mathscr{C}}_b(\demi{0,T};L^\frac{n}{2}(\Ri^n,\Lambda^2)) \\
		d^* b_i &\in{\mathscr{C}}_b(\demi{0,T};L^\frac{n}{2}(\Ri^n,\Lambda^1)).
	\end{align*}
	Then $(u_1,b_1)=(u_2,b_2)$.
\end{theorem}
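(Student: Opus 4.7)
The plan is to set $w := u_1 - u_2$ and $c := b_1 - b_2$, and observe that since both solutions start from the same initial data $(u_0, b_0)$, the common linear part $e^{-t\Sh}u_0,e^{-t\Mh}b_0$ cancels and $(w,c)$ satisfies the integral system
\begin{align*}
w(t) &= -\int_0^t e^{-(t-s)\Sh}\Leray\bigl(u_1 \lrcorner dw + w \lrcorner du_2 + d^*c \lrcorner b_1 + d^*b_2 \lrcorner c\bigr)(s)\,\dd s, \\
c(t) &= -\int_0^t e^{-(t-s)\Mh}\,d\bigl(w \lrcorner b_1 + u_2 \lrcorner c\bigr)(s)\,\dd s,
\end{align*}
with vanishing initial data. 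The constraints pass to the difference: $d^*w = 0$, and since both $b_i\in \Rr(d)$ one has $db_i=0$ so $dc=0$; hence $\Dh w = dw$ and $\Dh c = d^*c$, both in $\CC_b([0,T[;L^{n/2})$ by hypothesis. Moreover, using that the linear parts of the mild formulations for $u_1$ and $u_2$ agree, one checks that $dw(t),d^*c(t)\to 0$ in $L^{n/2}$ as $t\to 0^+$.

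The strategy is a short-time contraction in a norm tuned to the critical scaling. I would control the continuous, vanishing-at-zero quantity
\[
\phi(t) := \|w(t)\|_{L^n} + \|c(t)\|_{L^n} + \|dw(t)\|_{L^{n/2}} + \|d^*c(t)\|_{L^{n/2}}
\]
on a small interval $[0,T']$, relying on two tools: the Stokes--Maxwell intertwining $\Mh d = d\Sh$ (and its adjoint $d^*\Mh = \Sh d^*$), which transfers one derivative from the non-linearity onto the semigroup at the cost of $(t-s)^{-1/2}$, and the Leibniz inequality \eqref{Leibniz} applied with $\alpha=\alpha'=\beta=\beta'=n$ and $\gamma=n/2$, which gives, for instance,
\[
\|d(w \lrcorner b_1)\|_{L^{n/2}}\;\lesssim\; \|dw\|_{L^{n/2}}\|b_1\|_{L^n} + \|w\|_{L^n}\|d^*b_1\|_{L^{n/2}},
\]
and analogous bounds for $d(u_2\lrcorner c)$, $d(w \lrcorner b_2)$, etc. Each product in the $c$-equation is then dominated in $L^{n/2}$ by $M\,\phi(s)$, where $M$ depends only on the hypothesised norms of $(u_i,b_i,du_i,d^*b_i)$.

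For the $w$-equation I would handle the ``Navier--Stokes-type'' products via Cartan's identity $u_1 \lrcorner dw = \mathcal L_{u_1}w - d(u_1\lrcorner w)$: the exact term $d(u_1\lrcorner w)$ is annihilated by $\Leray$, and using $d^*u_1=0$ the remaining piece is rewritten in divergence form so that the gradient can be moved onto the Stokes semigroup (yielding a kernel of order $(t-s)^{-1/2}$ in time) while the inner factor is bounded in $L^{n/2}$ by $M\,\phi(s)$; the Laplace-force terms $d^*c \lrcorner b_1$ and $d^*b_2 \lrcorner c$ are treated similarly via \eqref{Leibniz} after using $dc=0$ and $db_i=0$ to identify the Dirac factors with $d^*c$ or $d^*b_2$. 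Assembling everything yields an integral inequality of the form
\[
\phi(t)\;\lesssim\; M \int_0^t (t-s)^{-\tfrac12}\phi(s)\,\dd s\qquad\text{on }[0,T'].
\]
Since $\phi$ is continuous with $\phi(0)=0$ and the kernel is integrable, the singular Gronwall lemma forces $\phi\equiv 0$ on $[0,T']$ for $T'$ small enough (depending on $M$). A standard continuation argument then extends the equality $(u_1,b_1)=(u_2,b_2)$ from $[0,T']$ to all of $[0,T]$ by restarting the same analysis at any time where the two solutions have already been shown to coincide.

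The main obstacle is the critical scaling of the output norm $L^n$: naïve H\"older bounds on products like $u_1 \lrcorner dw$ produce an $L^{n/3}$ quantity, and mapping this back to $L^n$ by the heat semigroup produces a non-integrable kernel $(t-s)^{-1}$. The crux of the proof is therefore to exploit the incompressibility $d^*u_1=0$ and the cocycle condition $db_i=0$ (together with Cartan's formula and the intertwinings $\Mh d = d\Sh$, $d^*\Mh = \Sh d^*$) so that every nonlinear contribution can be recast in a divergence form amenable to \eqref{Leibniz} and to a kernel of order $(t-s)^{-1/2}$; this is where the Leibniz-style inequality of Section~\ref{tools} plays the decisive role, and also why the argument would fail for low-regularity domains in which that inequality is not available.
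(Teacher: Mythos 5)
Your setup (difference system, vanishing linear part, intertwining $\Mh d=d\Sh$, Leibniz inequality, continuation from the largest interval of coincidence) matches the paper's, but the central analytic step of your argument does not go through, and the device that makes the paper's proof work is missing. The claimed inequality $\phi(t)\lesssim M\int_0^t(t-s)^{-1/2}\phi(s)\,\dd s$, with $M$ depending only on the hypothesised norms of $(u_i,b_i,du_i,d^*b_i)$, is not attainable at critical scaling. Concretely: your sample Leibniz bound $\|d(w\lrcorner b_1)\|_{L^{n/2}}\lesssim\|dw\|_{L^{n/2}}\|b_1\|_{L^n}+\dots$ is dimensionally inconsistent --- the product $\|dw\|_{L^{n/2}}\|b_1\|_{L^n}$ controls an $L^{n/3}$ quantity, not an $L^{n/2}$ one; to land in $L^{n/2}$ the inequality \eqref{Leibniz} forces either $\|\Dh w\|_{L^{n}}\|b_1\|_{L^n}$ (a norm you do not control) or $\|\Dh w\|_{L^{n/2}}\|b_1\|_{L^\infty}$ (a norm the hypotheses do not give). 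The same obstruction reappears after any rewriting in divergence form: with both factors at critical regularity, the semigroup estimate needed to return to $L^n$ (resp.\ $L^{n/2}$ after applying $d$ or $d^*$) produces a kernel $(t-s)^{-1}$, not $(t-s)^{-1/2}$, and no Gronwall argument applies. This is the well-known failure of boundedness of the Navier--Stokes-type bilinear operator on $\CC_tL^n_x$; if your inequality were available, the hypothesis $du_i,d^*b_i\in\CC_b(\demi{0,T};L^{n/2})$ would be superfluous.

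The paper resolves this by approximating each given solution: one picks smooth $u_i^\e,b_i^\e$ with $\|d(u_i-u_i^\e)\|_{L^\infty_tL^{n/2}_x}\le\e$ and splits every bilinear term into a \emph{rough} part, where the coefficient $u_i-u_i^\e$ has small critical norm and the $(t-s)^{-1}$ kernel is absorbed by the maximal regularity Theorem~\ref{Sylvie_disciple} in $L^r_tL^{n/2}_x$ (yielding a factor $\e$), and a \emph{smooth} part, where the coefficient $u_i^\e$ lies in $L^\infty_x$ (subcritical) so that the kernel improves to $(t-s)^{-1/2}$ and the contribution carries a factor $\sqrt{\tau}\,\|u_i^\e\|_{L^\infty}$, made $\le\e$ by shrinking $\tau$. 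Smallness thus comes from $\e$, not from $\tau$ alone --- this is essential because the critical norms of $u_1,u_2,b_1,b_2$ do not shrink as the time interval does. Your proposal correctly identifies the obstacle in its final paragraph but offers only the divergence-form rewriting as a cure, which does not improve the scaling; the approximation-plus-maximal-regularity mechanism is the actual content of the proof and needs to be supplied.
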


\begin{remark}
	The condition $(du,d^*b)\in {\mathscr{C}}_b(\demi{0,T};L^{\frac{n}{2}}(\Ri^n,\Lambda^2))\times
	{\mathscr{C}}_b(\demi{0,T};L^{\frac{n}{2}}(\Ri^n,\Lambda^1)))$ in fact implies $(u,b)\in {\mathscr{C}}_b(\demi{0,T};{\rm{\sf N}}^n(d^*)_{|_{\Lambda^1}})\times
	{\mathscr{C}}_b(\demi{0,T};{\rm{\sf R}}^n(d)_{|_{\Lambda^2}})$.
\end{remark}
\begin{proof}
	Assume that there exists $t^*\in \demi{0,\infty}$ such that $(u_1,b_1)=(u_2,b_2)$ on $[0,t^*]$. We write $(u_i,b_i)(t^*,\cdot)=(u_*)$.
	
	Let $u=u_1-u_2$ and $b=b_1-b_2$. For $i=1,2$, since $(u_i,b_i)$ is a solution of $\eqref{mhd1diff}$, we have 
	\begin{align*}
		u_i &= a_1 + B_1(u_i,u_i) + B_2(b_i,b_i) \\
		b_i &= a_2 + B_3(u_i,b_i).
	\end{align*}
	Hence 
	\begin{align}
		u =& B_1(u,u_1)+B_1(u_2,u)+B_2(b,b_1)+B_2(b_2,b) \\
		b =& B_3(u,b_1)+B_3(u_2,b)\,.
	\end{align}

	Let $\e>0$. let $(u_i^\e, b_i^\e)$ be such that $u_i^\e$ and $b_i^\e$ are in $\CC^2[0,T];\CS(\Ri^n))$ with 
	\begin{align}
	&\|d(u_i-u_i^\e)\|_{L^\infty_t(L^\frac{n}{2}_x)} \le \e \label{du_epsilon}\\
	&\|d(b_i-b_i^\e)\|_{L^\infty_t(L^\frac{n}{2}_x)} \le \e \, . \label{db_epsilon}
	\end{align}
Note that this in particular implies 
\begin{align}
	&\|u_i-u_i^\e \|_{L^\infty_t(L^n_x)} \le \e  \label{u_epsilon} \\
	&\|b_i-b_i^\e \|_{L^\infty_t(L^n_x)} \le \e.  \label{b_epsilon} 
\end{align}
We want to prove that there exists some $r>1$ and $\tau>0$ such that 
\begin{align}
	\|du\|_{L^r([t^*,t^*+\tau],L^\frac{n}{2}_x)} &\le K_{r,n,u_i,b_i}\, \e \left( \|du\|_{L^r([t^*,t^*+\tau],L^\frac{n}{2}_x)}+\|d^*b\|_{L^r([t^*,t^*+\tau],L^\frac{n}{2}_x)} \right)\\
	\|d^*b\|_{L^r([t^*,t^*+\tau],L^\frac{n}{2}_x)} &\le K_{r,n,u_i,b_i}\, \e \left( \|d^*b\|_{L^r([t^*,t^*+\tau],L^\frac{n}{2}_x)}+\|du\|_{L^r([t^*,t^*+\tau],L^\frac{n}{2}_x)}\right)
\end{align}
Let $\tau>0$ and $t\in [t^*,t^*+\tau]$.
\begin{enumerate}
	\item Let us look at $dB_1(u,u_1)$ first. We can write $dB_1(u,u_1)= dB_1(u,u_1^\e)+ dB_1(u,u_1-u_1^\e)$. 
	\begin{itemize}
		\item Let us begin with $dB_1(u,u_1-u_1^\e)$. Since $\Leray$ is the projection on $\Nn(d^*)$,
	\begin{equation*}
dB_1(u,u_1-u_1^\e)(t)= \int_{t^*}^t \Dh e^{-(t-t^*-s)\Sh} \Leray\big(u(s)\contr d(u_1-u_1^\e)(s)\big) \dd s = \Dh B_1(u,u_1-u_1^\e)(t).
	\end{equation*}
Using the fact that $\|\Dh f\|_r \sim \|\Sh^\frac{1}{2}f\|_r$ for all $r\in ]1,\infty[$, we can estimate $\Sh^\frac{1}{2}B_1(u,u_1-u_1^\e)$ instead of $dB_1(u,u_1-u_1^\e)$. Using the maximal regularity Theorem~\ref{Sylvie_disciple} we get:
	\begin{equation*}
		\int_{t^*}^t \Sh^\frac{1}{2} e^{-(t-s)\Sh} \Leray\big(u(s)\contr d(u_1-u_1^\e)(s)\big) \dd s=\int_{t^*}^t \Sh e^{-(t-s)\Sh} \Sh^{-\frac{1}{2}}\Leray\big(u(s)\contr d(u_1-u_1^\e)(s)\big) \dd s,
	\end{equation*}
	So
	\begin{align*}
		\left\|\int_{t^*}^t \Sh^\frac{1}{2} e^{-(t-s)\Sh} \Leray\big(u(s)\contr d(u_1-u_1^\e)(s)\big) \dd s \right\|_{L^r(\ouvert{t^*,t^*+\tau}) L^\frac{n}{2}_x} &\less_{r,n} \left\|\Sh^{-\frac{1}{2}} \Leray\big(u\contr d(u_1-u_1^\e)\big)\right\|_{L^r_tL^\frac{n}{2}_x} \\
		&\less_{r,n} \left\| \Leray\big(u\contr d(u_1-u_1^\e)\big)\right\|_{L^r_t W^{1,\frac{n}{3}}_x} \\
		&\less_{r,n} \|u\|_{L^r_tL^n_x} \|d(u_1-u_1^\e)\|_{L^\infty_t L_x^\frac{n}{2}} \\
		&\less_{r,n} \e \|du\|_{L^r_t L^\frac{n}{2}_x}.
	\end{align*}
Hence there exists some constant $K_{r,n}$ such that
\begin{equation}\label{B11}
\|dB_1(u,u_1-u_1^\e)\|_{L^r_tL^\frac{n}{2}_x}\le K_{r,n} \e \|du\|_{L^r_t L^\frac{n}{2}_x}.
\end{equation}
	\item The second term does not require maximal regularity:
	\begin{align*}
		\left\| dB_1(u,u_1^\e)(t) \right\|_{\frac{n}{2}}&\less_n \int_{t^*}^t \left\| d e^{-(t-s)\Sh } \right\|_{L^{\frac{n}{2}}\rightarrow L^{\frac{n}{2}}} \left\| \Leray \big( u(s)\contr du_1^\e(s)\big) \dd s \right\|_{\frac{n}{2}} \\
		&\less \int_{t^*}^t \frac{1}{\sqrt{t-s}} \|u(s)\|_n \| du_1^\e(s) \|_n \dd s\, .
	\end{align*}
	Using the convolution injection $L^1\star L^r\hookrightarrow L^r$ we get:
	\begin{equation}\label{B12}
		\left\| dB_1(u,u_1^\e)\right\|_{L^r([0,\tau[;L^\frac{n}{2})}\less_{r,n} 2\sqrt{\tau}\, \|d u\|_{L^r([t^*,t^*+\tau[;L^\frac{n}{2})} \|d u_1^\e \|_{L^\infty(L^n_x)}.
	\end{equation}
	\end{itemize}
Now $\|d u_1^\e \|_{L^\infty(L^n)}$ is well defined but not necessarily bounded as $\e$ goes to $0$. However we can always pick $\tau$ small enough to ensure that $\sqrt{\tau} \|d u_1^\e \|_{L^\infty(L^n)}\le \e$.
Therefore combining estimates \eqref{B11} and \eqref{B12} there exists a constant $K_{r,n,u_i}$ such that
\begin{equation}\label{B1_uniqueness}
 \left\| dB_1(u,u_1) \right\|_{L^r([t^*,t^*\tau[;L^\frac{n}{2})} \le K_{r,n,u_i} \e  \|d u\|_{L^r([t^*,t^*+\tau[;L^\frac{n}{2})}.
\end{equation}

\item Let us write $dB_1(u_2,u)=dB_1(u_2-u_2^\e,u)+dB_1(u_2^\e,u)$.
Then by maximal regularity we get:
\begin{align*}
	\|dB_1(u_2-u_2^\e,u)\|_{L^r_t L^\frac{n}{2}}&\less_{r,n} \left\|\Sh^{-\frac{1}{2}} \Leray\big((u_2-u_2^e)\contr du\big)\right\|_{L^r_tL^\frac{n}{2}_x} \\
	&\less_{r,n} \|du\|_{L^r_tL^\frac{n}{2}_x} \|u_2-u_2^\e\|_{L^\infty_t L^n} \\
	&\less_{r,n} \e \|du\|_{L^r_t L^\frac{n}{2}_x}.
\end{align*}
Besides 
\begin{align*}
		\left\| dB_1(u_2^\e,u)(t) \right\|_{\frac{n}{2}}&\less_{n} \int_{t^*}^t \left\| d e^{-(t-s)\Sh } \right\|_{L^{\frac{n}{2}}\rightarrow L^{\frac{n}{2}}} \left\| \Leray \big( (u_2^\e)(s)\contr du(s)\big) \dd s \right\|_{\frac{n}{2}} \\
	&\less_n \int_{t^*}^t \frac{1}{\sqrt{t-s}} \|u_2^\e(s)\|_\infty \| du(s) \|_\frac{n}{2} \dd s \, ,\\
\end{align*}
And by convolution we get
\begin{equation}
	\left\| dB_1(u_2^\e,u)(t) \right\|_{\frac{n}{2}}\less_{r,n} \sqrt{\tau} \|u_2^\e\|_{L^\infty_tL^\infty_x} \| du \|_{L^r_tL^\frac{n}{2}_x}.
\end{equation}
Setting $\tau$ such that $\sqrt{\tau} \|u_2^\e\|_{L^\infty_tL^\infty_x}\le \e$, we finally get
\begin{equation}
	\left\| dB_1(u_2,u) \right\|_{L^r([t^*,t^*+\tau[;L^\frac{n}{2})} \le K_{r,n,u_i} \e  \|d u\|_{L^r([t^*,t^*+\tau[;L^\frac{n}{2})}.
\end{equation}
\item The next terms $B_2(b,b_1)$ and $B_2(b_2,b)$ are treated in the exact same way. 
\item Recall that $b=B_3(u,b_1)+B_3(u_2,b)$.
We start by writing $d^*B_3(u,b_1)=d^*B_3(u,b_1-b_1^\e)+d^*B_3(u,b_1^\e)$. 
\begin{itemize}
	\item Let us recall from section \ref{LqLp} that $d^*e^{-(t-s)\Mh} d=\Sh e^{-(t-s)\Sh}$. Using the maximal regularity property \ref{Sylvie_disciple}   we then get:
	\begin{equation}\label{UD1}
		\| d^*B_3(u,b_1-b_1^\e) \|_{L^r_tL^\frac{n}{2}_x} \le \| u\contr (b_1-b_1^\e)\|_{L^r_tL^\frac{n}{2}_x} \less_{r,n} \|u\|_{L^r_tL^n_x} \|(b_1-b_1^\e)\|_{L^\infty_tL^n_x} \less_{r,n} \e \|du\|_{L^r_tL^\frac{n}{2}_x}.
	\end{equation} 
	\item Using the Leibniz-rule \eqref{Leibniz}, we get: 
	\begin{align*}
		\|d^*B_3(u,b_1^\e)(t)\|_\frac{n}{2} &\less_n \int_{t^*}^t \frac{1}{\sqrt{t-s}} \|d(u\contr b_1^e)\|_{\frac{n}{2}}(s)ds\\
		&\less_n \int_{t^*}^t \frac{1}{\sqrt{t-s}} \left(\|u(s)\|_n\|db_1^\e(s)\|_n+\|du(s)\|_{\frac{n}{2}}\|b_1^\e(s)\|_\infty\right),
	\end{align*}
And by convolution we can conclude that 
\begin{equation}\label{U=2}
	\|d^*B_3(u,b_1^e)\|_{L^r_tL^\frac{n}{2}_x} \less_{r,n} \sqrt{\tau} \left(\|b_1^\e\|_{L^\infty_tL^\infty_x}+\|db_1^\e\|_{L^\infty_tL^n_x}\right) \|du\|_{L^r_tL^\frac{n}{2}_x}.
\end{equation}
\end{itemize}
Choosing $\tau$ such that $\sqrt{\tau} \left(\|b_1^\e\|_{L^\infty_tL^\infty_x}+\|db_1^\e\|_{L^\infty_tL^n_x}\right)\le \e$ let us finally get
\begin{equation}
	\|d^*B_3(u,b_1)\|_{L^r_tL^\frac{n}{2}_x} \le K_{r,n,u_i,b_i}\, \e \|du\|_{L^r_tL^\frac{n}{2}_x}.
\end{equation}
A similar computation shows that 
\begin{equation}
	\|d^*B_3(u_2,b)\|_{L^r_tL^\frac{n}{2}_x} \le K_{r,n,u_i,b_i}\, \e \|d^*b\|_{L^r_tL^\frac{n}{2}_x},
\end{equation}
and with this last estimate we have proven that
\begin{align}
	\left\| du \right\|_{L^r([t^*,t^*+\tau[;L^\frac{n}{2})} &\le K_{r,n,u_i,b_i}\, \e \left( \|d u\|_{L^r([t^*,t^*+\tau[;L^\frac{n}{2})}+ \|d^* b\|_{L^r([t^*,t^*+\tau[;L^\frac{n}{2})} \right)\label{contraction}\\
	\left\| d^* b \right\|_{L^r([t^*,t^*+\tau[;L^\frac{n}{2})} &\le K_{r,n,u_i,b_i} \e \left(\|d u\|_{L^r([t^*,t^*+\tau[;L^\frac{n}{2})} + \|d^* b\|_{L^r([t^*,t^*+\tau[;L^\frac{n}{2})} \right)\nonumber,
\end{align}
where $K_{r,n,u_i,b_i}$ is a constant independent of $u$, and $\e$ can be chosen arbitrarily small.
Then letting $\e$ be such that $K_{r,n,u_i,b_i}\e \le \frac{1}{4}$ we get
\begin{equation*}
	\left\| du \right\|_{L^r([t^*,t^*+\tau[;L^\frac{n}{2})} + \left\| d^* b \right\|_{L^r([t^*,t^*+\tau[;L^\frac{n}{2})} \le \frac{1}{2} \left(	\left\| du \right\|_{L^r([t^*,t^*+\tau[;L^\frac{n}{2})} + \left\| d^* b \right\|_{L^r([t^*,t^*+\tau[;L^\frac{n}{2})}\right),
\end{equation*}
which proves that $du$ and $d^*b$ (and hence $u$ and $b$) are equal to $0$ on $[t^*,t^*+\tau[$.
Let 
\begin{equation*}
	I=\left\{t^*, \text{the system \eqref{mhd1diff} has a unique solution on } [0,t^*[.\right\}
\end{equation*}
Then $I$ is open, and it is also closed by continuity. Thus $I=[0,T[$ by connectedness.  
\end{enumerate}
\end{proof}

\end{document}